\newcommand{\Cdb}{\ensuremath{\mathbb{C}}}
\newcommand{\Ddb}{\ensuremath{\mathbb{D}}}
\newcommand{\Tdb}{\ensuremath{\mathbb{T}}}
\newcommand{\Zdb}{\ensuremath{\mathbb{Z}}}
\renewcommand{\P}{\mbox{${\mathcal P}$}}
\newcommand{\norm}[1]{\Vert#1\Vert}
\newcommand{\bignorm}[1]{\bigl\Vert#1\bigr\Vert}
\newcommand{\Bignorm}[1]{\Bigl\Vert#1\Bigr\Vert}
\newtheorem{theorem}{Theorem}[section]
\newtheorem{lemma}[theorem]{Lemma}
\newtheorem{corollary}[theorem]{Corollary}
\newtheorem{proposition}[theorem]{Proposition}
\newtheorem{definition}[theorem]{Definition}
\theoremstyle{remark}
\theoremstyle{definition}
\numberwithin{equation}{section}
\begin{document}

\title[Families of polygonal type operators]
{Commuting families of polygonal type operators on Hilbert space}

\author[C. Le Merdy]{Christian Le Merdy}
\email{clemerdy@univ-fcomte.fr}
\address{Laboratoire de Math\'ematiques de Besan\c con, 
Universit\'e de Franche-Comt\'e, 16 route de Gray
25030 Besan\c{c}on Cedex, FRANCE}

\author[M. N. Reshmi ]{M.N. Reshmi}
\email{rmazhava@math.univ-toulouse.fr}
\address{Institute de Math\'ematiques Toulouse,
Universit\'e  Paul Sabatier, 118, route de Narbonne
F-31062 Toulouse Cedex 9,   FRANCE}

\date{\today}

\begin{abstract} 
Let $T\colon H\to H$ be a bounded operator on Hilbert space $H$. We say that 
$T$ has a polygonal type if  there exists an open convex polygon 
$\Delta\subset {\mathbb D}$, with $\overline{\Delta}\cap{\mathbb T}\neq\emptyset$,
such that the spectrum $\sigma(T)$ is included in $\overline{\Delta}$ and the resolvent
$R(z,T)$ satisfies an estimate
$\Vert R(z,T)\Vert \lesssim \max\{\vert z-\xi\vert^{-1}\, :\, \xi\in 
\overline{\Delta}\cap{\mathbb T}\}$ for 
$z\in\overline{\mathbb D}^c$. The class of polygonal type operators 
(which goes back to De Laubenfels and Franks-McIntosh) contains the class of Ritt operators.
Let $T_1,\ldots,T_d$ be commuting operators on $H$, with $d\geq 3$. We prove functional calculus properties
of the $d$-tuple $(T_1,\ldots,T_d)$ under various assumptions involving poygonal type. The main ones 
are the following.
(1) If the operator $T_k$ is a contraction for all $k=1,\ldots,d$ and if $T_1,\ldots,T_{d-2}$ have a polygonal type, 
then $(T_1,\ldots,T_d)$ satisfies a generalized von Neumann inequality 
$\norm{\phi(T_1,\ldots,T_d)}\leq C\norm{\phi}_{\infty,{\mathbb D}^d}$ for polynomials $\phi$ in $d$ variables;
(2) If $T_k$ is polynomially bounded with a polygonal type for all $k=1,\ldots,d$, then 
there exists an invertible operator $S\colon H\to H$ such that $\norm{S^{-1}T_kS}\leq 1$
for all $k=1,\ldots,d$.
\end{abstract}

\maketitle

\vskip 0.5cm
\noindent
{\it 2000 Mathematics Subject Classification :}  
47A60, 47A20, 47A13.

\smallskip
\noindent
{\it Key words:} 
Dilations, functional calculus.

\vskip 1cm

\section{Introduction}\label{1Intro}   
The starting point of this paper is the following famous open problem in operator theory. Let $H$ be a Hilbert space, let 
$d\geq 3$ be an integer and let $T_1,T_2,\ldots,T_d\colon H\to H$ be commuting linear contractions. Is there
a constant $C\geq 1$ such that
\begin{equation}\label{1VN}
\norm{\phi(T_1,\ldots,T_d)}\leq C\sup\bigl\{\vert\phi(z_1,\ldots,z_d)\vert\,:\,
(z_1,\ldots,z_d)\in\Ddb^d\bigr\} 
\end{equation}
for all complex polynomials $\phi$ in $d$ variables?
Here $\Ddb=\{z\in\Cdb\, :\, \vert z\vert<1\}$ denotes the open unit disc
of the complex plane.   Von Neumann's inequality and Ando's 
inequality    assert that (\ref{1VN}) holds true for $d=1$ and 
$d=2$, respectively, with $C=1$. However not much is known 
for $d\geq 3$. 
We refer to \cite{Varo}, \cite[Chapter 1]{P0}, \cite{GS} and \cite{GR} for information and references on this issue.
 An estimate (\ref{1VN}) is usually called a generalized von Neumann inequality.

The main objective of this paper is to prove such an estimate in the case when the
contractions $T_k$ belong to the class of polygonal type operators. 
Let $\Tdb=\{z\in\Cdb\, :\, \vert z\vert=1\}$ denote the unit circle. 
We say that a bounded operator $T\colon H\to H$
has a polygonal type if there exists an open convex polygon 
$\Delta\subset\Ddb$, with $\overline{\Delta}\cap\Tdb\neq\emptyset$,
such that the spectrum $\sigma(T)$ is included in $\overline{\Delta}$ and the resolvent
$R(z,T)$ satisfies an estimate
$$
\norm{R(z,T)}\lesssim \max\bigl\{\vert z-\xi\vert^{-1}\, :\, \xi\in 
\overline{\Delta}\cap\Tdb\bigr\},\qquad
z\in\overline{\Ddb}^c.
$$
Note that this implies that  $\sigma(T)\cap\Tdb$ is finite.

\begin{figure}[!h]
\includegraphics[scale=0.25]{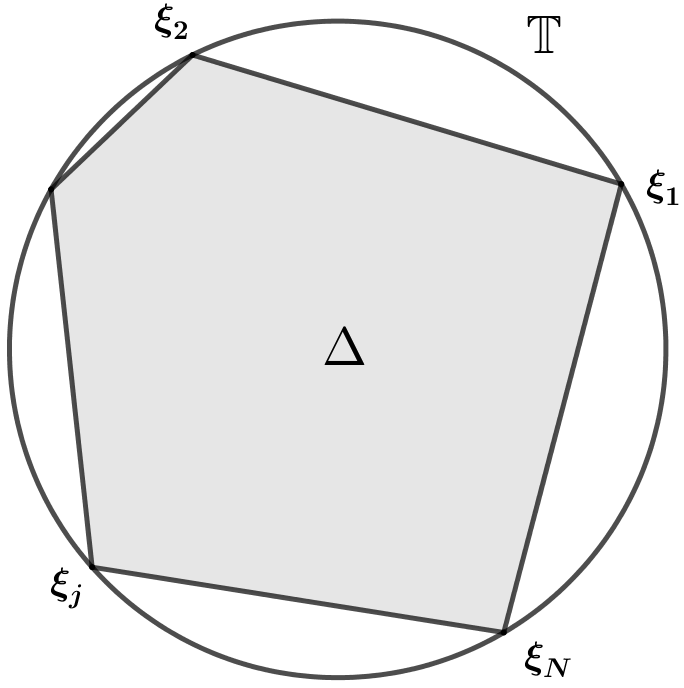}
\caption{Polygon containing the spectrum of $T$}
\label{fig:polygon}
\end{figure}

Polygonal type operators were introduced (independently) by De Laubenfels in \cite{dL} and 
by Franks and McIntosh in \cite{FMI}, in order to give a partial solution to the so-called Halmos problem.
Recall that in \cite{Hal}, Halmos had asked whether any polynomially bounded
operator  $T\colon H\to H$ is necessarily similar to a contraction. 
Pisier solved this question negatively  (see \cite{P1} and  \cite[Chapter 9]{P0}). 
Pisier's result was followed by \cite{dL,FMI}, in which 
it is proved that if $T\colon H\to H$
has a polygonal type, then $T$ is polynomially bounded (if and) only if $T$ is similar to a contraction.
It is also established in  \cite{dL} that if $T$ is a polynomially bounded operator 
with a polygonal type, then there exist an open convex polygon 
$\Delta\subset\Ddb$ and a constant $C\geq 1$ such that
\begin{equation}\label{1DL}
\norm{\phi(T)}\leq C\sup\bigl\{\vert\phi(z)\vert\, :\, z\in\Delta\bigr\},
\end{equation}
for all complex polynomials $\phi$ in one variable. This remarkable functional calculus property 
can be regarded as an improvement of von Neumann's inequality.

In this paper we construct a new unitary dilation property for
polynomially bounded, polygonal type operators $T\colon H\to H$,
see Section 3. Using this dilation property, we show in Section 4
that if $T_1,\ldots,T_d\colon H\to H$ are commuting contractions and if  
$T_1,\ldots,T_{d-2}\colon H\to H$ have a polygonal type, then
there exist a Hilbert space $H'$, two bounded operators $J\colon H\to H'$
and $Q\colon H'\to H$ and commuting unitaries $U_1,\ldots,U_d$ on $H'$ such that 
$$
T_1^{n_1} \cdots T_d^{n_d} = Q U_1^{n_1} \cdots U_d^{n_d} J, 
\qquad n_1,\ldots,n_d\geq 0.
$$
This implies that if $T_1,\ldots,T_d\colon H\to H$ are commuting contractions and  
$T_1,\ldots,T_{d-2}\colon H\to H$ have a polygonal type, then
$(T_1,\ldots,T_d)$ satisfies a generalized von Neumann inequality (\ref{1VN}). We also 
extend \cite{dL,FMI} by showing that $T_1,\ldots,T_d\colon H\to H$ are commuting polynomially bounded,
polygonal type operators, then they are jointly similar to contractions, that is,
there exists an invertible operator $S\colon H\to H$ such that $\norm{S^{-1}T_kS}\leq 1$
for all $k=1,\ldots,d$. Finally in Section 5, we extend (\ref{1DL}) to $d$-tuples. More precisely,
we show that $T_1,\ldots,T_d\colon H\to H$ are commuting polynomially bounded, polygonal type operators,
then there exist an open convex polygon 
$\Delta\subset\Ddb$ and a constant $C\geq 1$ such that
$$
\norm{\phi(T_1,\ldots,T_d))}\leq C\sup\bigl\{\vert\phi(z_1,\ldots,z_d)\vert\, :\, (z_1,\ldots,z_d)\in\Delta^d\bigr\},
$$
for all complex polynomials $\phi$ in $d$ variables. The approach to prove this 
functional calculus estimate is different from the one in \cite{dL}. It relies on a
Franks-McIntosh type decomposition established in \cite{BLM}.

Ritt operators are a sub-class of polygonal type operators  (for which we refer to  \cite{LM2}).
The results of the present paper, summarized in the previous paragraph, generalize results
from \cite[Section 5]{ArLM1} which concerns Ritt operators only.

\section{Polygonal type operators}\label{2Poly}
Throughout we consider a Hilbert space
$H$ and we let $B(H)$ denote the Banach algebra 
of all bounded operators on $H$, equipped with the operator norm. 
We let $I_H$ denote the identity operator on $H$.  
For any $T\in B(H)$, we let $\sigma(T)$ denote the spectrum of 
$T$ and for any $z$ in the resolvent set $\Cdb\setminus\sigma(T)$,
we let $R(z,T)=(z-T)^{-1}$ denote the resolvent operator at $z$.
As usual in this context, $z$ stands for $zI_H$. Also we let 
${\rm Ran}(T)$ and ${\rm Ker}(T)$ denote the range and
the kernel of $T$, respectively.

For simplicity we call ``polygon" any open convex 
polygon of $\Cdb$ (that is, a bounded, finite intersection of open complex half-planes).
All polygons considered in this paper will be included in the unit disc $\Ddb$.

\begin{definition}\label{2Pol-Type} 
We say that $T\in B(H)$ has a polygonal
type if there exist a polygon 
$\Delta\subset\Ddb$, with $\overline{\Delta}\cap\Tdb\neq\emptyset$,
and a constant $M\geq 0$
such that
$$
\sigma(T)\subset \overline{\Delta}
$$
and 
$$
\norm{R(z,T)}\leq M \max\bigl\{\vert z-\xi\vert^{-1}\, :\, \xi\in 
\overline{\Delta}\cap\Tdb\bigr\},\qquad
z\in\overline{\Ddb}^c.
$$
\end{definition}

As mentioned in the introduction, the class of Hilbert space
operators with a polygonal type first appeared in \cite{dL} and \cite{FMI},
although not with this terminology. 
They were recently studied in
\cite{BLM} (see also \cite{B}) in the Banach space setting, as generalisations
of Ritt operators.

Let $E\subset\Tdb$ be a non-empty finite set. Following \cite[Definition 2.1 and Remark 2.3]{BLM}, 
we say that
$T\in B(H)$ is a Ritt$_E$ operator if $\sigma(T)\subset \overline{\Ddb}$ and 
there exists a constant $M\geq 0$ such that 
$\norm{R(z,T)}\leq M\max\{\vert z-\xi\vert^{-1}\, :\, \xi\in E\}$ for all 
$z\in\overline{\Ddb}^c$. 
Following \cite[Definition 2.6]{BLM}, we let $E_r$ be the interior of the convex hull of $E$
and of the disc $\{\lambda\in\Cdb\, :\,\vert\lambda\vert <r\}$, for any $r\in (0,1)$. 
If $T$ is Ritt$_E$, then there exists $r\in(0,1)$ and a constant $M\geq 0$ such that 
$\sigma(T)\subset\overline{E_r}$,  by
\cite[Lemma 2.8]{BLM}. Furthermore for all $r\in(0,1)$,
there exists a polygon $\Delta$
such that
$$
E_r\subset \Delta\subset\Ddb.
$$
We easily deduce the following.

\begin{lemma}\label{RittE} Let $T\in B(H)$.
\begin{itemize}
\item [(1)] If $T$ is a Ritt$_E$ operator for some non-empty finite set $E\subset \Tdb$,
then $T$ has a polygonal type and we have $\sigma(T)\cap\Tdb\subset E$.
\item [(2)]   If $T$ has a polygonal type, then $T$ is a 
Ritt$_E$ operator for any non-empty finite subset $E\subset\Tdb$ 
containing  $\sigma(T)\cap\Tdb$.
\end{itemize}
\end{lemma}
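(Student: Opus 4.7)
The plan is to reduce both parts of the lemma to the cited properties of the auxiliary sets $E_r$ recalled just before the statement, and to a direct comparison between the maxima appearing in the Ritt$_E$ and the polygonal type estimates.

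For part (1), I would start from the given Ritt$_E$ operator $T$ and apply the cited \cite[Lemma 2.8]{BLM} to produce $r\in(0,1)$ with $\sigma(T)\subset\overline{E_r}$. Since $E_r$ is the interior of the convex hull of $E\subset\Tdb$ and of the disc $\{\vert\lambda\vert<r\}$ with $r<1$, any convex combination of these ingredients that ends up on $\Tdb$ must be supported entirely on $E$, so $\overline{E_r}\cap\Tdb=E$; this already yields $\sigma(T)\cap\Tdb\subset E$. Invoking the remark stated immediately before the lemma, I pick a polygon $\Delta$ with $E_r\subset\Delta\subset\Ddb$. Then $\sigma(T)\subset\overline{E_r}\subset\overline{\Delta}$ and $E\subset\overline{E_r}\subset\overline{\Delta}\cap\Tdb$, hence $\overline{\Delta}\cap\Tdb\neq\emptyset$. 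Since enlarging the set over which the maximum is taken only enlarges the maximum, the Ritt$_E$ bound at once implies the polygonal type bound with the same constant.

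For part (2), I would fix a polygon $\Delta$ and a constant $M$ provided by the polygonal type assumption, and set $V:=\overline{\Delta}\cap\Tdb$. This is a finite set because each of the finitely many edges of $\Delta$ is a chord of $\Ddb$ meeting $\Tdb$ in at most two points. Let $E\supset\sigma(T)\cap\Tdb$ be a finite subset of $\Tdb$, and split $V=V_0\sqcup V_1$ with $V_0=V\cap E$ and $V_1=V\setminus E$. The crucial observation is that $V_1\cap\sigma(T)=\emptyset$, so $z\mapsto R(z,T)$ is holomorphic, hence bounded in operator norm, on a small open neighborhood of $V_1$. Writing $g(z):=\max\{\vert z-\eta\vert^{-1}:\eta\in E\}$, I would then bound $\Vert R(z,T)\Vert$ by $g(z)$ up to a constant, for $z\in\overline{\Ddb}^c$, by splitting into three regimes: for $\vert z\vert$ large the Neumann series gives $\Vert R(z,T)\Vert\leq 2/\vert z\vert$ while $g(z)\geq 1/(\vert z\vert+1)$; for $z$ at small distance from some point $\xi^*\in V_1$ the resolvent is uniformly bounded by holomorphy while $g(z)$ is bounded below by a positive constant, using that $E$ is non-empty and $z$ stays in a bounded set; in the remaining bounded region, where $z$ is at uniform distance from $V_1$, the contribution of $V_1$ to $\max_{\xi\in V}\vert z-\xi\vert^{-1}$ is bounded, whereas the contribution of $V_0\subset E$ is dominated by $g(z)$, so the polygonal type estimate directly yields the desired bound.

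The main obstacle, if any, is the middle regime in part (2): one has to combine the purely local uniform bound on $R(\cdot,T)$ coming from holomorphy near the spurious vertices $V_1$ with the uniform lower bound on $g$ on the same bounded region. Once these three regimes are handled, the rest is a straightforward comparison of suprema over finite sets.
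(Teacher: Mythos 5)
Your argument is correct and, for part~(1), follows exactly the route the paper sketches (the authors only state ``we easily deduce'' after recalling that a Ritt$_E$ operator has $\sigma(T)\subset\overline{E_r}$ for some $r$ and that $E_r$ sits inside some polygon $\Delta\subset\Ddb$): you take $\Delta\supset E_r$, note $\sigma(T)\subset\overline{E_r}\subset\overline{\Delta}$ and $\overline{E_r}\cap\Tdb=E$ (so $E\subset\overline{\Delta}\cap\Tdb$, nonempty, and $\sigma(T)\cap\Tdb\subset E$), and then observe that enlarging the index set of the maximum from $E$ to $\overline{\Delta}\cap\Tdb$ only weakens the resolvent bound. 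One small notational slip: you write ``$E\subset\overline{E_r}\subset\overline{\Delta}\cap\Tdb$,'' but $\overline{E_r}\not\subset\Tdb$; what you mean (and what your argument actually uses) is $E\subset\overline{E_r}\cap\Tdb\subset\overline{\Delta}\cap\Tdb$.

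For part~(2) the paper again just says ``easily deduce'' (implicitly leaning on the cited remark of \cite{BLM} for the flexibility in the choice of the peripheral set), whereas you supply a self-contained, elementary resolvent comparison. Your decomposition $V=\overline{\Delta}\cap\Tdb=V_0\sqcup V_1$ with $V_0=V\cap E$, $V_1=V\setminus E$, the observation that $V_1\cap\sigma(T)=\emptyset$ (because $\sigma(T)\cap\Tdb\subset E$), and the three-regime comparison of $\max_{\xi\in V}\vert z-\xi\vert^{-1}$ with $g(z)=\max_{\eta\in E}\vert z-\eta\vert^{-1}$ (large $\vert z\vert$ via the Neumann series; $z$ near $V_1$ via local boundedness of the holomorphic resolvent together with a positive lower bound on $g$; the remaining compact region via a uniform lower bound on $g$ and uniform distance from $V_1$) is sound and closes all gaps. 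The only thing worth adding explicitly is that $\sigma(T)\subset\overline{\Delta}\subset\overline{\Ddb}$, which is the other half of the Ritt$_E$ definition, but this is immediate. Overall: correct, with part~(2) more explicit and more elementary than the paper's citation-based shortcut, at the cost of being a bit longer.
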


\iffalse
The name ``polygonal type" is justified by the following 
statement and by Theorem \ref{2DL} below.

\begin{lemma}\label{2Self-Improv} 
Let $T\in B(H)$. The following assertions are equivalent.
\begin{itemize}
\item [(i)] The operator $T$ has a polygonal type.
\item [(ii)] There exist a polygon $\Delta\subset
\Ddb$ such that 
$$
\sigma(T)\subset\overline{\Delta}, 
$$
the set of vertices
of $\Delta$ belonging to $\Tdb$ coincides with $\sigma(T)\cap\Tdb$, and if the
latter is non-empty, there
exists a constant $M\geq 0$
such that 
$$
\norm{R(z,T)}\leq M\max\bigl\{\vert z-\xi\vert^{-1}\, :\, \xi\in 
\sigma(T)\cap\Tdb\bigr\},\qquad z\in\overline{\Delta}^c.
$$
\end{itemize}
\end{lemma}

\begin{proof}
It is plain that (ii) implies (i). Conversely, let us assume (i) and let $E=\sigma(T)\cap\Tdb$.
We may assume that 
$E\not=\emptyset$. Thus, $T$ is Ritt$_E$ by Lemma  \ref{RittE}, (2).
Following \cite[Definition 2.6]{BLM}, we let $E_r$ be the interior of the convex hull of $E$
and of the disc $\{\lambda\in\Cdb\, :\,\vert\lambda\vert <r\}$, for any $r\in (0,1)$. According to
\cite[Lemma 2.8]{BLM}, there exists $r\in(0,1)$ and a constant $M\geq 0$ such that 
$\sigma(T)\subset\overline{E_r}$ and 
$\norm{R(z,T)}\leq M\max\{\vert z-\xi\vert^{-1}\, :\, \xi\in 
E\}$ for all $z\notin\overline{E_r}$. Now choose any polygon $\Delta$ such that
$$
E_r\subset\Delta\subset\Ddb\qquad\hbox{and}\qquad \overline{\Delta}\cap\Tdb=E.
$$
Then $\Delta$ satisfies (ii).
\end{proof}

\fi

We note the following result concerning sectoriality, established in \cite[Lemma 2.4]{BLM}.
We refer e.g. to \cite[Section 2.1]{Haa} for the
definition and basic properties of sectorial operators.

\begin{lemma}\label{2Sector}
If $T\in B(H)$ has a polygonal
type, then for any $\xi\in \sigma(T)\cap\Tdb$, the operator
$I_H-\overline{\xi}T$ is a sectorial operator of type $<\frac{\pi}{2}$.
\end{lemma}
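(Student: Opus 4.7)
The plan is to reduce the sectoriality of $A:=I_H-\overline{\xi}T$ near $0$ to the polygonal resolvent bound of $T$ near $\xi$, by means of an affine change of spectral variable $\lambda\mapsto z:=\xi(1-\lambda)$.

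First I would establish the geometric fact that $1-\overline{\xi}\,\overline{\Delta}\subset\overline{S_\omega}$ for some $\omega\in(0,\pi/2)$, where $S_\omega=\{re^{i\theta}: r>0,\,|\theta|<\omega\}$. Since $\overline{\Delta}$ is a convex polygon contained in $\overline{\Ddb}$ and $\xi\in\overline{\Delta}\cap\Tdb$, the point $\xi$ must be a vertex of $\Delta$: indeed, the tangent line to $\Tdb$ at $\xi$ meets $\overline{\Ddb}$ only at $\xi$, so neither of the two edges of $\Delta$ incident to $\xi$ can lie on that tangent line; both descend strictly into $\Ddb$. Consequently the tangent cone of $\overline{\Delta}$ at $\xi$ is a closed convex cone with opening strictly less than $\pi$, strictly contained in the open inward half-plane $\{w:\mathrm{Re}(\overline{\xi}w)<0\}\cup\{0\}$. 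Applying the rotation $w\mapsto -\overline{\xi}w$ (which sends this inward half-plane to the open right half-plane) yields the required inclusion with some $\omega<\pi/2$. In particular, by the spectral mapping theorem, $\sigma(A)=1-\overline{\xi}\sigma(T)\subset\overline{S_\omega}$.

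Next comes the resolvent identity. A direct computation gives
\[
\lambda I_H-A=(\lambda-1)I_H+\overline{\xi}T=-\overline{\xi}\bigl(\xi(1-\lambda)I_H-T\bigr),
\]
hence $R(\lambda,A)=-\xi\,R(z,T)$ with $z=\xi(1-\lambda)$ and $|z-\xi|=|\lambda|$. For any $\omega'\in(\omega,\pi/2)$ and any $\lambda\in\Cdb\setminus\overline{S_{\omega'}}$, the first step (applied with $\omega'$ in place of $\omega$, eventually after enlarging the cone slightly) shows $z\notin\overline{\Delta}$, with distance to $\overline{\Delta}$ comparable to $|\lambda|$. By Lemma \ref{RittE}(2), $T$ is Ritt$_E$ for $E=\sigma(T)\cap\Tdb$; the extension of the polygonal resolvent bound to the whole of $\Cdb\setminus\overline{E_r}$ given by \cite[Lemma 2.8]{BLM} (for $r<1$ chosen so that $E_r\subset\Delta$) then delivers
\[
\|R(\lambda,A)\|=\|R(z,T)\|\leq \frac{M}{\min_{\eta\in E}|z-\eta|}\leq\frac{M}{|z-\xi|}=\frac{M}{|\lambda|}
\]
for all small enough $|\lambda|$. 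For large $|\lambda|$ the bound $\|R(\lambda,A)\|\leq 2/|\lambda|$ is immediate from the Neumann series, and a compactness argument handles the intermediate range. This gives sectoriality of $A$ of type $\omega<\pi/2$.

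The main obstacle, in my view, is the geometric first step: one must identify correctly the tangent cone at the vertex $\xi$ and verify that it sits strictly inside the open inward half-plane. Once this is set up cleanly, the rest is an almost mechanical translation of the polygonal resolvent bound via the affine change of variable $z=\xi(1-\lambda)$, modulo the (harmless) technical need to invoke the extension of the resolvent estimate off $\overline{\Ddb}^c$ afforded by the Ritt$_E$ viewpoint of Lemma \ref{RittE}.
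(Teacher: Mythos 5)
The paper does not give its own proof of this lemma: it is quoted from \cite[Lemma 2.4]{BLM}. Your strategy---the affine change of spectral variable $z=\xi(1-\lambda)$, after observing that $\xi$ is necessarily a vertex of $\overline{\Delta}$ whose tangent cone, under the rotation $w\mapsto -\overline{\xi}w$, falls inside a closed sector of half-angle $<\pi/2$---is the natural one and it works, but two details are off and should be repaired. First, the displayed chain is reversed in the middle: since $\xi\in E$ one has $\min_{\eta\in E}|z-\eta|\leq|z-\xi|$, not $\geq$. What you are actually using is that for $|\lambda|$ small enough, $\xi$ is the \emph{closest} point of $E$ to $z=\xi(1-\lambda)$ (the remaining $\eta\in E$ lie at distance bounded below from $\xi$), so that $\min_{\eta\in E}|z-\eta|=|z-\xi|=|\lambda|$; say that explicitly rather than writing an inequality in the wrong direction. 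Second, the parenthetical ``$r<1$ chosen so that $E_r\subset\Delta$'' has the quantifiers out of order: \cite[Lemma 2.8]{BLM} produces a specific $r$ for which the resolvent estimate holds on $\Cdb\setminus\overline{E_r}$, and for that $r$ there is no reason $E_r$ should fit inside the polygon $\Delta$ you fixed at the outset (for instance $0$ need not belong to $\Delta$). Either choose $\Delta$ \emph{after} $r$, taking any polygon with $E_r\subset\Delta\subset\Ddb$ and $\overline{\Delta}\cap\Tdb=E$ (such a $\Delta$ still witnesses the polygonal type), or, more directly, run the tangent-cone argument on $\overline{E_r}$ itself: its tangent cone at $\xi$ has half-angle at most $\arcsin r<\pi/2$, which is all that is needed. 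With those adjustments the proof is complete.
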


Let $\P$ denote the algebra of all complex polynomials (in one variable). For any
non-empty bounded open set $\Omega\subset\Cdb$ and any $\phi\in\P$,
we set
$$
\norm{\phi}_{\infty,\Omega}=\sup\bigl\{\vert\phi(\lambda)\vert\, :\,
\lambda\in\Omega\bigr\}.
$$
Let us recall a few classical definitions and results.
An operator $T\in B(H)$ is called polynomially bounded if 
there exists a constant $C\geq 1$ such that 
$\norm{\phi(T)}\leq C\norm{\phi}_{\infty,{\mathbb D}}$ for all $\phi\in\P$.
Further $T\in B(H)$ is called similar to a contraction if there 
exists an invertible $S\in B(H)$ such that $S^{-1}TS$ is a contraction, i.e.
$\norm{S^{-1}TS}\leq 1$. Von Neumann's inequality implies that any
operator similar to a contraction is polynomially bounded. A
famous result of Pisier \cite{P1} asserts that the converse is wrong.
We refer to \cite[Chapter 9]{P0} for  more information
on this. Just after Pisier's result was circulated, De Laubenfels \cite{dL}
showed the following remarkable property (the paper \cite{FMI} also
contains the equivalence ``$(i)\,\Leftrightarrow\,(ii)$'').

\begin{theorem}\label{2DL} (\cite[Theorem 4.4]{dL})
Let $T\in B(H)$ be an operator of polygonal type. The following assertions
are equivalent.
\begin{itemize}
\item [(i)] $T$ is polynomially bounded.
\item [(ii)] $T$ is similar to a contraction.
\item [(iii)] There exists a polygon $\Delta\subset\Ddb$ and 
a constant $C\geq 1$ such that 
$$
\norm{\phi(T)}\leq C\norm{\phi}_{\infty,\Delta},
\qquad \phi\in\P.
$$
\end{itemize}
\end{theorem}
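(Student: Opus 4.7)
The strategy I would follow organises the equivalences around two easy and two substantive implications. The implication (ii) $\Rightarrow$ (i) is von Neumann's inequality applied to the contraction $S^{-1}TS$, with the extra constant absorbed into $\norm{S}\norm{S^{-1}}$. The implication (iii) $\Rightarrow$ (i) is immediate from $\Delta\subset\Ddb$, which gives $\norm{\phi}_{\infty,\Delta}\leq\norm{\phi}_{\infty,\Ddb}$. The real work is in (i) $\Rightarrow$ (iii) and (iii) $\Rightarrow$ (ii).

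For (i) $\Rightarrow$ (iii), set $E=\sigma(T)\cap\Tdb$, which is finite by polygonal type. If $E=\emptyset$, then $\sigma(T)\subset\Ddb$ is compact and any polygon $\Delta$ tightly containing $\sigma(T)$ works via a direct Cauchy integral over $\partial\Delta$. When $E$ is non-empty, for each $\xi\in E$ Lemma \ref{2Sector} renders $A_\xi=I_H-\overline{\xi}T$ sectorial of type $<\pi/2$. The central step is to show that polynomial boundedness of $T$ transfers into a bounded $H^\infty(\Sigma_\theta)$ calculus for each $A_\xi$ on some sector of angle $\theta<\pi/2$; on Hilbert space this can be extracted from McIntosh's square-function characterisation, with the required square function estimates supplied by the combination of polynomial boundedness of $T$ and the polygonal resolvent bound outside $\overline{\Ddb}$. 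With such a calculus in hand at each vertex, fix a polygon $\Delta$ witnessing the polygonal type, choose a slightly larger polygon $\Delta'$ with $\overline{\Delta}\cap\Tdb=\overline{\Delta'}\cap\Tdb$, and represent $\phi(T)$ as a Cauchy integral over $\partial\Delta'$. A smooth partition of unity on $\partial\Delta'$ splits $\phi(T)$ into an ``interior'' term bounded away from $\Tdb$, estimated by the polygonal resolvent bound, plus one ``vertex'' term at each $\xi\in E$ that can be rewritten as a bounded $H^\infty$ function of $A_\xi$. Each piece is then controlled by $\norm{\phi}_{\infty,\Delta}$, yielding (iii).

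For (iii) $\Rightarrow$ (ii), the calculus bound means that $\overline{\Delta}$ is a $C$-spectral set for $T$. Since $\overline{\Delta}\subset\overline{\Ddb}$ is a convex polygon, I would apply dilation theory to produce a normal contraction $N$ on a larger Hilbert space with $\sigma(N)\subset\partial\Delta$ and $\phi(T)=P_H\phi(N)\vert_H$ for every $\phi$ continuous on $\overline{\Delta}$ and holomorphic on $\Delta$. A further Sz.-Nagy dilation of $N$ to a unitary on $\Ddb$ together with Paulsen's similarity criterion then gives $T$ similar to a contraction.

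The principal obstacle I anticipate is the central step of (i) $\Rightarrow$ (iii): bridging the \emph{global} polynomial boundedness of $T$ on $\Ddb$ with the \emph{local} sectorial structure of $A_\xi$ near each vertex $\xi\in E$. The polygonal resolvent estimate from Definition \ref{2Pol-Type} is the natural tool to control the ``far'' contributions, but one must truncate with care so as to simultaneously preserve the $H^\infty$ control near $\xi$ and the uniform bound by $\norm{\phi}_{\infty,\Delta}$ on the whole integrand. A secondary difficulty lies in (iii) $\Rightarrow$ (ii), where the construction of the normal dilation uses the convexity of $\Delta$ and the position of its vertices on $\Tdb$ in an essential way.
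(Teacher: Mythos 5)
The paper does not prove Theorem~\ref{2DL}: it is quoted from de Laubenfels \cite{dL} (with \cite{FMI} credited for $(i)\Leftrightarrow(ii)$), so there is no in-paper proof to compare against directly. The two nontrivial implications are, however, reproved in multivariable form later on: $(i)\Rightarrow(ii)$ is a special case of Corollary~\ref{4VNI}, whose engine is the explicit unitary dilation of Proposition~\ref{3Specific} built from the square function estimate of Theorem~\ref{2B}, while $(i)\Rightarrow(iii)$ is a special case of Theorem~\ref{5Poly-FC}, proved via the Franks--McIntosh type decomposition of \cite{BLM}. Your easy arrows $(ii)\Rightarrow(i)$ and $(iii)\Rightarrow(i)$ are fine, and your sketch of $(i)\Rightarrow(iii)$ is in the right spirit, but the specific mechanism the paper uses is a global resolution $\sum_i\theta_i\phi_i\psi_i\equiv 1$ in $H^\infty_{0}(\Ddb)$ rather than a partition of unity on $\partial\Delta'$: the $\theta_i(T)$ are controlled by a Cauchy integral on $\partial E_r$, while the $\phi_i(T)$ and $\psi_i(T)^*$ are controlled by square functions. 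A vertex-by-vertex partition of unity on $\partial\Delta'$ leaves you having to trade the bound near each vertex for $\norm{\phi}_{\infty,\Delta}$ rather than $\norm{\phi}_{\infty,\Ddb}$, and it is precisely the decomposition in \cite{BLM} that makes that trade.

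The genuine gap is in your $(iii)\Rightarrow(ii)$. The estimate $\norm{\phi(T)}\leq C\norm{\phi}_{\infty,\Delta}$ says that $\overline{\Delta}$ is a $C$-spectral set for $T$, but ``apply dilation theory to produce a normal $N$ with $\sigma(N)\subset\partial\Delta$ and $\phi(T)=P_H\phi(N)\vert_H$'' requires $\overline{\Delta}$ to be a \emph{complete} $K$-spectral set. The passage from the scalar bound to a completely bounded one is exactly the upgrade that Pisier's counterexample shows cannot be taken for granted, and neither convexity of $\Delta$ nor the placement of its vertices on $\Tdb$ obviously supplies it. The way to close the cycle without a normal-dilation theorem is $(iii)\Rightarrow(i)\Rightarrow(ii)$: from $(i)$ and polygonal type, Theorem~\ref{2B} gives square function estimates for $T$ and $T^*$, which feed the concrete construction of Proposition~\ref{3Specific} to produce $T^n=QV^nJ$ with $V$ unitary; this formula persists verbatim for matrices of polynomials, so $T$ is completely polynomially bounded, and Paulsen's similarity criterion finishes. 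Note that this route bypasses the question of when a $K$-spectral set is automatically completely $K$-spectral, which is the step your sketch quietly assumes.
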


Note that an operator $T\in B(H)$ with a polygonal type is necessarily power bounded,
by \cite[Theorem 2.10]{BLM} and Lemma \ref{RittE}. However there exist operators of polygonal type 
on Hilbert space which are not polynomially bounded,
see \cite[Section 5]{LM}.

We now review a recent result of Bouabdillah \cite{B} on square functions
associated with operators of polygonal type. Bouabdillah's work is carried
out in the Banach space setting, however we will present it in the Hilbertian 
case only. We fix a finite set
$$
E=\{\xi_1,\ldots,\xi_N\}\subset\Tdb,
$$
where $N\geq 1$ and $\xi_1,\ldots,\xi_N$ are distinct.
For any Ritt$_E$ operator $T$, we define a square function associated with $T$
by setting, for any
$x\in H$,
$$
\norm{x}_T : = \biggl(\sum_{k=0}^\infty \bignorm{T^k
(I_H -\overline{\xi_1}T)^\frac12(I_H -\overline{\xi_2}T)^\frac12\cdots
(I_H -\overline{\xi_N}T)^\frac12x}^2\biggr)^\frac12.
$$
For all $j=1,\ldots,N$, the operator $I_H -\overline{\xi_j}T$
is sectorial (see above) hence the square root $(I_H -\overline{\xi_j}T)^\frac12$
is a well-defined element of $B(H)$ obtained 
by the usual functional calculus of sectorial operators, see e.g. \cite[Section 3]{Haa}.
We note that $\norm{x}_T$ may be infinite.

\begin{theorem}\label{2B} (\cite{B}) 
Let $T\in B(H)$ be a Ritt$_E$ operator. If $T$ is polynomially
bounded, then there exists a contant $C\geq 0$ such that 
$$
\norm{x}_T\leq C\norm{x}, \qquad x\in H.
$$
\end{theorem}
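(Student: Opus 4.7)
The plan is to reduce to the contractive case via Theorem \ref{2DL}, recast the square function through an $H^\infty(\Delta)$ functional calculus on a polygon, and finish by a McIntosh-type square function theorem for the commuting family of sectorial operators $A_j := I_H - \overline{\xi_j}T$, $j=1,\ldots,N$.

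First I would reduce to the contractive case. By Theorem \ref{2DL}, there exists an invertible $S\in B(H)$ such that $V:=S^{-1}TS$ is a contraction, and $V$ inherits the polygonal type of $T$ through the similarity. Since similarity commutes with sectorial fractional powers (available by Lemma \ref{2Sector}), one has $T^k\prod_{j=1}^N(I_H-\overline{\xi_j}T)^{1/2} = SV^k\prod_{j=1}^N(I_H-\overline{\xi_j}V)^{1/2}S^{-1}$, so $\norm{x}_T\leq\norm{S}\,\norm{S^{-1}x}_V$; hence it suffices to treat the case where $T$ is itself a contraction. Next, the polynomial estimate $\norm{\phi(T)}\leq C_0\norm{\phi}_{\infty,\Delta}$ of Theorem \ref{2DL}(iii) extends, by bounded-pointwise approximation on the simply connected polygon $\Delta$, to a bounded $H^\infty(\Delta)\to B(H)$ functional calculus. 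The function $g(z):=\prod_{j=1}^N(1-\overline{\xi_j}z)^{1/2}$ has a single-valued holomorphic branch on $\Delta$ (since the $\xi_j$ lie on $\Tdb$, outside $\Delta$), belongs to $H^\infty(\Delta)$, and by uniqueness of fractional powers its image under the calculus equals $\prod_j(I_H-\overline{\xi_j}T)^{1/2}$. The square function thus takes the form $\norm{x}_T^2=\sum_{k\geq 0}\norm{(z^kg)(T)x}^2$, and the problem becomes a quadratic Littlewood--Paley estimate for the system $\{z^kg\}_{k\geq 0}$ in $H^\infty(\Delta)$.

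The corresponding scalar bound is immediate from the polygonal geometry: at each vertex $\xi_j\in\overline{\Delta}\cap\Tdb$ the interior angle of $\Delta$ is strictly less than $\pi$, which forces $1-|z|^2\gtrsim|1-\overline{\xi_j}z|$ on $\Delta$ near $\xi_j$; combined with $|g(z)|^2=\prod_j|1-\overline{\xi_j}z|$ this yields
$$
\sum_{k=0}^\infty|z^kg(z)|^2 \,=\,\frac{|g(z)|^2}{1-|z|^2}\,\leq\, M,\qquad z\in\Delta,
$$
for a finite constant $M$. To upgrade this to the operator inequality, I would invoke Lemma \ref{2Sector}: each $A_j=I_H-\overline{\xi_j}T$ is sectorial of angle strictly less than $\pi/2$, the $A_j$'s mutually commute, and the bounded $H^\infty(\Delta)$ calculus of $T$ induces a jointly bounded multi-variable $H^\infty$ functional calculus for the tuple $(A_1,\ldots,A_N)$ on a product of sectors. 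The discrete index $k$ can then be absorbed into an $N$-fold continuous integral via a quadrature identity of the form $\sum_k|z^kg(z)|^2\asymp\int_{(0,\infty)^N}\prod_j|1-\overline{\xi_j}z|\,e^{-2t_j\,\mathrm{Re}(1-\overline{\xi_j}z)}\,dt_1\cdots dt_N$, and the estimate concluded by applying McIntosh's square function theorem once to each $A_j$.

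The hard point is this last step. The passage from the scalar pointwise bound to the Hilbert space operator inequality is precisely a McIntosh-type theorem, and the novelty compared to the classical single-vertex Ritt situation is the simultaneous handling of $N$ points of spectral contact on $\Tdb$: making the multi-parameter quadrature rigorous and controlling the joint $H^\infty$ functional calculus of the commuting tuple $(A_1,\ldots,A_N)$ constitute the technical core of Bouabdillah's argument in \cite{B}.
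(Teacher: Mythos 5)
The paper does not prove Theorem~\ref{2B}: it is quoted verbatim from Bouabdillah's preprint \cite{B}, so there is no ``paper's own proof'' to compare against. Your proposal therefore has to be judged on its own terms.

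The general strategy you sketch (use Theorem~\ref{2DL} to get a polygonal $H^\infty(\Delta)$-functional calculus, observe the pointwise bound $\sum_{k\geq 0}|z^k g(z)|^2 = |g(z)|^2/(1-|z|^2)\leq M$ on $\Delta$ because each vertex angle is $<\pi$, then invoke a McIntosh-type square function theorem) is a sensible roadmap and goes in the right direction. A small simplification: the detour through a similarity $T=SVS^{-1}$ is unnecessary, since Theorem~\ref{2DL}(iii) already gives the $\Delta$-estimate for $T$ itself, with no need to first make $T$ a contraction. Also, the claim that you obtain a ``jointly bounded multi-variable $H^\infty$ functional calculus for the tuple $(A_1,\ldots,A_N)$'' is both overstated and unneeded: all you can and need to extract is the single-operator $H^\infty$ calculus of each $A_j=I_H-\overline{\xi_j}T$ (of sectorial angle $<\pi/2$), obtained by composing the $H^\infty(\Delta)$ calculus of $T$ with $z\mapsto 1-\overline{\xi_j}z$.

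The genuine gap is in the final step, and it is exactly where you concede it is. The scalar identity
$$
\sum_{k\geq 0}|z^k g(z)|^2 \;\asymp\; \int_{(0,\infty)^N}\prod_{j=1}^N\bigl|(1-\overline{\xi_j}z)^{1/2}e^{-t_j(1-\overline{\xi_j}z)}\bigr|^2\,dt_1\cdots dt_N ,\qquad z\in\Delta,
$$
is a statement about $|\cdot|^2$ of scalar functions, and it does \emph{not} transfer to an identity (or even an inequality) between the corresponding operator square functions. Indeed $\norm{\phi_k(T)x}^2 = \langle \phi_k(T)^*\phi_k(T)x,x\rangle$, and $\phi_k(T)^*\phi_k(T)$ is not $(\overline{\phi_k}\phi_k)(T)$; the product $\overline{\phi_k}\phi_k$ is not holomorphic and lies outside the functional calculus. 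Consequently, ``plugging the scalar quadrature into the calculus'' is not a valid move; the passage from the pointwise bound on $\Delta$ to $\sum_k\norm{\phi_k(T)x}^2\lesssim\norm{x}^2$ requires a bona fide square-function comparison lemma (a Cowling--Doust--McIntosh--Yagi type ``change of auxiliary function'' argument, or a duality/Cauchy--Schwarz scheme on $\ell^2(H)$, as in \cite{LM2} for the one-vertex Ritt case). Your sketch signals this (``the hard point is this last step'') but supplies no mechanism for it, so the proposal is an outline rather than a proof; the technical core you correctly identify is exactly what \cite{B} provides and what is missing here.
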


Here are a few comments on Theorem \ref{2B}. First, we note that 
if $T$ is polynomially bounded, then its adjoint $T^*$ is  polynomially bounded
as well. Hence Theorem \ref{2B} implies that in this case, we have an estimate 
$\norm{y}_{T^*}\leq C'\norm{y}$ for all $y\in H$. Second, it is shown in \cite{B}
that conversely, if there exist constants $C,C'\geq 0$ such that 
$\norm{x}_T\leq C\norm{x}$ and $\norm{y}_{T^*}\leq C'\norm{y}$ for all $x,y\in H$,
then $T$ is polynomially bounded. Third, \cite{B} considers other square functions than 
the above $\norm{\,\cdotp}_T$ and proves analogous results for them.

\section{A specific dilation}\label{3Dil}
Let $H$ be a Hilbert space.
If $T\in B(H)$ is a polynomially bounded operator 
of polygonal type, then there exists an invertible $S\in B(H)$
and a contraction $W\in B(H)$ such that $T=SWS^{-1}$, by Theorem \ref{2DL}.
According to  Nagy's dilation theorem (see e.g. \cite[Theorem 1.1]{P0}),
there exist a Hilbert space $K$, a unitary $V\in B(K)$ and an isometry $J_0\colon H\to K$
such that $W^n=J_0^*V^nJ_0$ for all $n\geq 0$. Set $J=J_0S^{-1}$ and 
$Q=SJ_0^*$. Then we obtain that
$$
T^n=QV^nJ,\qquad n\geq 0. 
$$
This is a unitary dilation of $T$.
The aim of this section is to devise a specific 
unitary dilation for polynomially bounded operators
of polygonal type, without any 
recourse to Nagy's dilation theorem.
This will be achieved in Proposition \ref{3Specific}.

For any index set $I$, we let  $\ell^2_{I}(H)$ denote the Hilbert space
of all $2$-summable families $(x_\iota)_{\iota\in I}$ of $H$. 
If $J\colon H\to \ell^2_{I}(H)$ is a bounded operator,
the operators $J_\iota\in B(H)$, $\iota\in I$, such that $J(x)=(J_\iota(x))_{\iota\in I}$ for all
$x\in H$, will be called the coordinates of $J$.

We fix a set 
$$
E=\{\xi_1,\ldots,\xi_N\}\subset\Tdb, 
$$
where $N\geq 1$ and $\xi_1,\ldots,\xi_N$ are distinct.
In the sequel,
we will use the Hilbertian direct sum $\ell^2_N(H)\mathop{\oplus}^2\ell^2_{\mathbb Z}(H)$.
We note that the latter can be
regarded as $\ell^2_{I}(H)$, where $I$ is the disjoint union  $I=\{1,\ldots,N\}\sqcup\Zdb$.
For any $T_1\in B(\ell^2_N(H))$ and $T_2\in B(\ell^2_{\mathbb Z}(H))$, we let 
$$
T_1\oplus T_2 \colon \ell^2_N(H)\mathop{\oplus}^2\ell^2_{\mathbb Z}(H)
\longrightarrow \ell^2_N(H)\mathop{\oplus}^2\ell^2_{\mathbb Z}(H)
$$
be the operator 
taking $(y_1,y_2)$ to $(T_1(y_1),T_2(y_2))$ for all $y_1\in \ell^2_N(H)$ and
$y_2\in \ell^2_{\mathbb Z}(H)$.

We let
$V_H\colon \ell^2_{\mathbb Z}(H)\to \ell^2_{\mathbb Z}(H)$
be the shift operator defined by
\begin{equation}\label{3VH}
V_H\bigl((x_k)_{k}\bigr) = (x_{k+1})_k,\qquad (x_k)_{k\in\mathbb Z}\in\ell^2_{\mathbb Z}(H).
\end{equation}
Then we let
$D_H\colon \ell^2_N(H)\to \ell^2_N(H)$ be the diagonal operator defined by
\begin{equation}\label{3DH}
D_H\bigl(x_1,x_2,\ldots,x_N\bigr) = \bigl(\xi_1 x_1,\xi_2 x_2,\ldots, \xi_N x_N\bigr),
\qquad x_1,\ldots,x_N\in H.
\end{equation}
Both $V_H$ and $D_H$ are unitaries.

\begin{proposition}\label{3Specific} 
Let $T\in B(H)$ be a polynomially bounded Ritt$_E$ operator.
Then there exist two bounded operators 
$$
J\colon H\longrightarrow \ell^2_N(H)\mathop{\oplus}^2\ell^2_{\mathbb Z}(H)
\qquad\hbox{and}\qquad
Q\colon \ell^2_N(H)\mathop{\oplus}^2\ell^2_{\mathbb Z}(H)\longrightarrow H
$$ 
such that 
$$
T^n=Q\bigl(D_H\oplus V_H^2\bigr)^nJ,\qquad n\geq 0, 
$$
and all the coordinates of $J$ belong to the bicommutant of $\{T\}$.
\end{proposition}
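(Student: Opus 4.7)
My plan is to split $T^n$ by a Lagrange-type interpolation at the nodes $\xi_1,\ldots,\xi_N$, and then to realize the ``eigenvalue'' summand through $D_H$ and the ``smooth remainder'' through $V_H^2$, combining both constructions additively. The starting point is the Euclidean division of $z^n$ by $\prod_k(z-\xi_k)$, which yields the identity
\[
T^n=\sum_{j=1}^N\xi_j^n L_j(T)+\mu A^2 Q_n(T),\qquad n\geq 0,
\]
where $L_j$ is the Lagrange polynomial at the nodes, $\mu=(-1)^N\prod_j\xi_j$, $A=\prod_j(I_H-\overline{\xi_j}T)^{1/2}$, and $Q_n$ is identically zero for $n<N$ and of degree $n-N$ for $n\geq N$ (its coefficients being the complete homogeneous symmetric polynomials $h_i(\xi_1,\ldots,\xi_N)$).

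For the $D_H$-part, I would take $J_1(x)=(x,\ldots,x)\in\ell^2_N(H)$ and $Q_1(y_1,\ldots,y_N)=\sum_j L_j(T)y_j$. Both operators are bounded, each coordinate of $J_1$ is $I_H$ and hence lies trivially in the bicommutant of $\{T\}$, and a direct computation gives $Q_1 D_H^n J_1=\sum_j\xi_j^n L_j(T)$.

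The heart of the proof is the construction of bounded operators $J_2\colon H\to\ell^2_\Zdb(H)$, with each coordinate in the bicommutant of $\{T\}$, and $Q_2\colon\ell^2_\Zdb(H)\to H$, such that $Q_2 V_H^{2n}J_2=\mu A^2 Q_n(T)$ for every $n\geq 0$. For $J_2$, I would place the vectors $T^k A x$ at suitably spaced positions of the $\Zdb$-lattice (for instance $(J_2x)_k=T^{\lfloor k/2\rfloor}Ax$ for $k\geq 0$ and $0$ otherwise), so that $\|J_2 x\|^2\lesssim\|x\|_T^2\leq C^2\|x\|^2$ by Theorem~\ref{2B}; each coordinate is then a polynomial in $T$ times $A$, hence in the bicommutant. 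For $Q_2$, I would use the fact that $T^*$ is also polynomially bounded of polygonal type and apply Theorem~\ref{2B} to $T^*$: the dual square function $\sum_l\|T^{*l}By\|^2\leq C^2\|y\|^2$ (with $B=A_{T^*}$) produces, by duality, bounded maps $\ell^2_\Zdb(H)\to H$ built from operator coefficients of the form $B^*T^l$. The choice $V_H^2$ rather than $V_H$ is essential: after iterating, the overlap window between $V_H^{2n}J_2x$ and the support of $Q_2$ has width of order $2n$, providing enough room to match the $n-N+1$ coefficients $h_i(\xi)$ of $Q_n$ through the combinatorial convolution of the index patterns used by $J_2$ and $Q_2$.

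I expect the main difficulty to lie in ensuring the boundedness of $Q_2$ while preserving the combinatorial match with $Q_n$. Naive ans\"atze --- such as $Q_2(u)=-A\sum_{k\leq-1}u_k$ with a constant operator coefficient --- do produce the desired identity on the finite-support range of $V_H^{2n}J_2$, but fail to be bounded on the whole of $\ell^2_\Zdb(H)$, since the coefficients do not decay. The construction must therefore arrange the operator coefficients so that $\|Q_2^*y\|^2$ is controlled by the $T^*$-square function of Theorem~\ref{2B}, while at the same time their convolution against those of $J_2$ reproduces the coefficients of $Q_n$; here the Franks--McIntosh type decomposition from \cite{BLM} is the natural tool to organise the bookkeeping. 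Once $J_2,Q_2$ are in place, setting $J=(J_1,J_2)$ and $Q(y_1,y_2)=Q_1y_1+Q_2y_2$ (both bounded, with all coordinates of $J$ in the bicommutant of $\{T\}$) gives
\[
Q(D_H\oplus V_H^2)^n J=Q_1 D_H^n J_1+Q_2 V_H^{2n}J_2=T^n,\qquad n\geq 0,
\]
which completes the proof.
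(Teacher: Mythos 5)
Your proposal correctly identifies the two main ingredients (the Bouabdillah square-function estimate and a split into a finite ``eigenvalue'' part realized by $D_H$ and a shift part realized by $V_H^2$), and the algebraic identity $T^n=\sum_j\xi_j^n L_j(T)+\mu A^2 Q_n(T)$ from Euclidean division of $z^n$ by $\prod_j(z-\xi_j)$ is correct, as is the $J_1,Q_1$ construction for the $D_H$ block. But the argument stops exactly where the real work begins: $Q_2$ is never constructed. You observe that the naive choice is unbounded and assert that a Franks--McIntosh decomposition ``is the natural tool to organise the bookkeeping'', yet no construction is given, and there is no argument reconciling boundedness of $Q_2$ with the combinatorial matching against the coefficients of $Q_n$. (As an aside, with your ansatz $(J_2x)_k=T^{\lfloor k/2\rfloor}Ax$ the naive $Q_2(u)=-A\sum_{k\le -1}u_k$ does not even reproduce the identity: each $T^mAx$ occupies two consecutive slots, so the partial sums pick up a spurious factor of $2$, and for $N\ge 2$ the sums do not vanish when $n<N$ although $Q_n=0$ there.) Moreover, the Franks--McIntosh decomposition of \cite{BLM} plays no role in this dilation at all; in the paper it is used only in Section~\ref{FC}.

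The paper's proof takes a genuinely different route and, crucially, produces $Q$ explicitly. Instead of splitting $T^n$ as a polynomial, it splits $H$: by the Mean Ergodic Theorem (Lemma~\ref{decomposition}), $H=\bigoplus_{j}\operatorname{Ker}(I_H-\overline{\xi_j}T)\oplus\overline{\operatorname{Ran}}\bigl(\prod_j(I_H-\overline{\xi_j}T)\bigr)$, with projections in the bicommutant. The $D_H$ block handles the eigenspace components exactly. For the shift block, the key is the Taylor expansion $1/\prod_j(1-\overline{\xi_j}z)=\sum_m a_m z^m$, whose coefficients $a_m$ are \emph{bounded} (Lemma~\ref{Boundedness of ak's}), together with the identity $\sum_m a_m T^m A^2x=x$ on the range component (Lemma~\ref{elements in the range}). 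One then places $T^{\lceil l/2\rceil}Ax_{N+1}$ at position $l$ of $J$, places $\overline{a_l}\,T^{*\lfloor l/2\rfloor}A^*y_{N+1}$ at position $l$ of $\widetilde{J}$, and sets $Q=\widetilde{J}^*$; both maps are bounded thanks to the square-function estimates for $T$ and $T^*$ and the boundedness (not decay) of $a_m$. Pairing $V_H^{2n}J(x)$ against $\widetilde{J}(y)$, the exponents combine as $n+\lceil l/2\rceil+\lfloor l/2\rfloor=n+l$, and $\sum_l a_l\langle T^{n+l}A^2x_{N+1},y_{N+1}\rangle=\langle T^nx_{N+1},y_{N+1}\rangle$ by Lemma~\ref{elements in the range}. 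This ``ceiling on $J$, floor with $a_l$ on $\widetilde{J}$'' mechanism is exactly the missing piece of your sketch.
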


Note that $D_H\oplus V_H^2$ is a unitary, so 
this proposition provides a unitary dilation of $T$. We will 
prove it at the end of this section. We need
preparation lemmas.

Throughout the rest of this section, we let 
$(a_m)_{m\geq 0}$ denote the sequence of complex numbers provided by the Taylor expansion,
\begin{equation}\label{expansion equation}
\frac{1}{\prod_{j=1}^{N}(1-\overline{\xi_j}z)} = 
\sum_{m=0}^{\infty}a_{m}z^m,\qquad z\in\Ddb.
\end{equation}

\begin{lemma}\label{Boundedness of ak's}
Let $(a_m)_{m\geq 0}$ be defined by (\ref{expansion equation}).
\begin{itemize}
\item [(1)] There exist $\beta_1,\ldots,\beta_N\in\Cdb$ such that $a_m=  \sum_{i=1}^{N}\beta_i(\overline{\xi_i})^m$ for all $m\geq 0$.
\item [(2)] The sequence $(a_m)_{m\geq 0}$ is bounded.
\end{itemize}
\end{lemma}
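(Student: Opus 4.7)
The plan is to read off both statements from a partial fraction decomposition of the rational function on the left-hand side of (\ref{expansion equation}).

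First, I would observe that since the $\xi_j$ are distinct points of $\Tdb$, the conjugates $\overline{\xi_1},\ldots,\overline{\xi_N}$ are also distinct elements of $\Tdb$, so the polynomial $\prod_{j=1}^N(1-\overline{\xi_j}z)$ has $N$ distinct simple roots $z=\xi_1,\ldots,\xi_N$. Consequently, the rational function $z\mapsto \bigl(\prod_{j=1}^N(1-\overline{\xi_j}z)\bigr)^{-1}$ admits a partial fraction decomposition of the form
$$
\frac{1}{\prod_{j=1}^{N}(1-\overline{\xi_j}z)} = \sum_{i=1}^N\frac{\beta_i}{1-\overline{\xi_i}z},
$$
for some uniquely determined complex numbers $\beta_1,\ldots,\beta_N$ (explicitly, each $\beta_i$ can be computed by the residue formula, but for the present lemma only the existence matters).

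For part (1), I would expand each summand as a geometric series on $\Ddb$, valid since $\vert \overline{\xi_i} z\vert<1$ for $z\in\Ddb$:
$$
\frac{\beta_i}{1-\overline{\xi_i}z} = \sum_{m=0}^{\infty}\beta_i(\overline{\xi_i})^m z^m, \qquad z\in\Ddb.
$$
Summing over $i$ and identifying coefficients with the Taylor expansion in (\ref{expansion equation}) (which is legitimate by the uniqueness of the Taylor coefficients of a holomorphic function on $\Ddb$) yields $a_m=\sum_{i=1}^N\beta_i(\overline{\xi_i})^m$ for every $m\geq 0$.

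For part (2), the estimate is immediate from (1) combined with the fact that $\vert\overline{\xi_i}\vert=1$: indeed,
$$
\vert a_m\vert \leq \sum_{i=1}^N\vert \beta_i\vert\,\vert\overline{\xi_i}\vert^m = \sum_{i=1}^N\vert\beta_i\vert, \qquad m\geq 0.
$$
There is essentially no obstacle here; the only thing worth double-checking is the hypothesis that the $\xi_j$ are distinct (so that the decomposition has only simple poles, yielding a bounded sequence rather than one that grows polynomially). This is built into the setup at the start of the section.
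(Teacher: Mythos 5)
Your proof is correct, and it takes a genuinely different route from the paper's. The paper proves part (1) by induction on $N$: it forms the Cauchy product of the series for $\bigl(\prod_{j=1}^{N-1}(1-\overline{\xi_j}z)\bigr)^{-1}$ with the geometric series for $(1-\overline{\xi_N}z)^{-1}$, then uses the closed form for a finite geometric sum to re-express $a_{m,N}$ in the required shape, explicitly producing a recurrence for the coefficients $\beta_i$. You instead invoke the partial fraction decomposition of the rational function $\bigl(\prod_{j=1}^N(1-\overline{\xi_j}z)\bigr)^{-1}$, which is legitimate because the $\xi_j$ (hence the poles) are distinct and simple, and then you expand each summand $\beta_i/(1-\overline{\xi_i}z)$ as a geometric series. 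These are really two presentations of the same underlying fact — the paper's induction is just a hands-on derivation of the partial fraction decomposition — but your version is more conceptual and shorter, at the cost of citing the decomposition as a known result rather than deriving it. Your part (2) argument matches the paper's in spirit: both observe that (2) is immediate from (1) since $|\overline{\xi_i}|=1$. One small remark you correctly flag: the distinctness of the $\xi_j$ is essential, since a repeated root would introduce polynomial factors in $m$ and destroy boundedness; the paper's inductive proof silently relies on the same hypothesis (it divides by $1-\overline{\xi_i}\xi_N$).
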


\begin{proof}
Property (2) clearly follows from property (1), so we only need to prove (1).  We use
induction on $N$. For convenience we write $a_{m,N}$ instead of $a_m$ for the coefficients
in (\ref{expansion equation}). In the case $N=1$, we have
$$
\frac{1}{1-\overline{\xi_1}z} = \sum_{m=0}^{\infty}\overline{\xi_1}^mz^m,\qquad z\in\Ddb,
$$
hence the result follows at once. Now let us assume that $N\geq 2$ and that property (1) holds true for $N-1$. 
Thus we have an expansion
$$
\frac{1}{\prod_{j=1}^{N-1}(1-\overline{\xi_j}z)} = \sum_{m=0}^{\infty}a_{m,N-1}z^m,\qquad z\in\Ddb,
$$
where $a_{m,N-1} = \sum_{i=1}^{N-1}\alpha_i(\overline{\xi_i})^m$ for some constants 	
$\alpha_1,\ldots,\alpha_{N-1}\in\mathbb{C}.$
We write
$$
\sum_{m=0}^{\infty}a_{m,N}z^m = \frac{1}{1-\overline{\xi_N}z}\,\biggl(\sum_{j=0}^{\infty}a_{j,N-1}z^j\biggr)
= \biggl(\sum_{k=0}^\infty \overline{\xi_N}^k z^k\biggr)\biggl(\sum_{j=0}^{\infty}a_{j,N-1}z^j\biggr).
$$
This implies, for all $m\geq 0$,
\begin{align*}
a_{m,N}&=\sum_{j=0}^{m}a_{j,N-1}\overline{\xi_N}^{m-j}\\
&= \sum_{j=0}^{m}\Bigl(\sum_{i=1}^{N-1}\alpha_i(\overline{\xi_i})^j\Bigr)(\overline{\xi_{N}})^{m-j}\\
&=(\overline{\xi_{N}})^m\sum_{j=0}^{m}\sum_{i=1}^{N-1}\alpha_i(\overline{\xi_i})^j(\xi_{N})^j\\
&=(\overline{\xi_{N}})^m\sum_{i=1}^{N-1}\alpha_i\biggl[\frac{1-(\overline{\xi_i}\xi_{N})^{m+1}}{1-\overline{\xi_i}\xi_{N}}\biggr]\\
&=\sum_{i=1}^{N-1}\alpha_i\frac{(\overline{\xi_{N}})^m}{1-\overline{\xi_i}\xi_{N}}
-\sum_{i=1}^{N-1}\alpha_i\frac{\overline{\xi_i}\xi_N}{1-\overline{\xi_i}\xi_{N}}(\overline{\xi_i})^m.
\end{align*}
Define
$$
\beta_i = - \frac{\alpha_i\overline{\xi_i}\xi_{N}}{1-\overline{\xi_i}\xi_{N}}
\quad\hbox{for}\ 1\leq i\leq N-1\qquad\hbox{and}\qquad
\beta_{N} = \sum_{i=1}^{N-1}\frac{\alpha_i}{1-\overline{\xi_i}\xi_{N}}.
$$
Then we have
$a_{m,N}=\sum_{i=1}^{N}\beta_i(\overline{\xi_i})^m$,
which  proves (1). 
\end{proof}

\begin{lemma}\label{elements in the range}
Let $T\in B(H)$ be a Ritt$_E$ operator, and assume that $T$ is polynomially bounded. 
Then for any $x\in \overline{\rm Ran}\Bigl(\prod_{j=1}^{N}(I_H-\overline{\xi_j}T)\Bigr)$,	
we have
$$
\sum_{m=0}^{\infty}a_{m}T^m\prod_{j=1}^{N}(I_H-\overline{\xi}_jT)x=x.
$$
(The convergence of the series is part of the result.)
\end{lemma}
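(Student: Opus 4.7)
The plan is to show that the partial sums $\sum_{m=0}^n a_m T^m P(T) x$, where $P(z) = \prod_{j=1}^N (1 - \overline{\xi_j} z)$, equal $x$ minus a correction of at most $N$ terms involving $T^m x$ with $m > n$, and then to use the Bouabdillah square function bound to make these tail terms decay to $0$.

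Set $Q_n(z) = \sum_{m=0}^n a_m z^m$. The identity (\ref{expansion equation}) is equivalent to $\bigl(\sum_{m\geq 0} a_m z^m\bigr) P(z) = 1$ on $\Ddb$, so the polynomial $1 - Q_n(z) P(z)$, whose degree is at most $n+N$, has vanishing Taylor coefficients at orders $0,1,\ldots,n$. Hence there exist scalars $r_{n+1},\ldots,r_{n+N}$ with
$$
1 - Q_n(z) P(z) = \sum_{m=n+1}^{n+N} r_m z^m,
$$
each $r_m$ being a finite sum of products $a_j p_k$ where the $p_k$ are the (fixed) coefficients of $P$. By Lemma \ref{Boundedness of ak's}(2), $\vert r_m\vert \leq C_0$ for some $C_0$ independent of $n$. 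Applying polynomial functional calculus at $T$ gives
$$
\sum_{m=0}^n a_m T^m P(T) x \,=\, x \,-\, \sum_{m=n+1}^{n+N} r_m T^m x, \qquad x \in H. \qquad (\star)
$$

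It then suffices to show that $T^m x \to 0$ for every $x \in \overline{{\rm Ran}(P(T))}$. Each $I_H - \overline{\xi_j}T$ is sectorial of type $<\frac{\pi}{2}$ by Lemma \ref{2Sector}, and these operators all commute, so their square roots commute as well and $R := \prod_{j=1}^N (I_H - \overline{\xi_j}T)^{1/2}$ satisfies $R^2 = P(T)$; in particular ${\rm Ran}(P(T)) \subset {\rm Ran}(R)$. By Theorem \ref{2B}, $\sum_{k\geq 0} \Vert T^k R y\Vert^2 \leq C^2 \Vert y\Vert^2$ for all $y \in H$, whence $T^k R y \to 0$, and therefore $T^m x \to 0$ for every $x \in {\rm Ran}(P(T))$. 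Since $T$ is power bounded (Theorem \ref{2DL}), a routine $\varepsilon/2$-approximation extends this decay to the closure $\overline{{\rm Ran}(P(T))}$. Plugged into $(\star)$, the tail is bounded in norm by $N C_0 \sup_{m>n}\Vert T^m x\Vert$, which tends to $0$; this yields both convergence of $\sum_{m\geq 0} a_m T^m P(T) x$ and the claimed equality with $x$.

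The main obstacle is the norm decay $T^m x \to 0$ on $\overline{{\rm Ran}(P(T))}$: without polynomial boundedness (exploited via the square function of Theorem \ref{2B}), this would not be available for a general polygonal type operator, and indeed one cannot hope to bypass an assumption of this strength. By contrast, the algebraic manipulation producing $(\star)$ is essentially formal generating function arithmetic.
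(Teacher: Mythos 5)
Your proof is correct and follows essentially the same route as the paper's: the generating-function identity $(\star)$ (the paper's $S_k(T)$), the square-function bound of Theorem~\ref{2B} applied through the factorization $P(T)=R^2$, $R=\prod_{j=1}^N(I_H-\overline{\xi_j}T)^{1/2}$, to get $T^m x\to 0$ on ${\rm Ran}(P(T))$, and a density argument to pass to $\overline{{\rm Ran}(P(T))}$. The only variation is in the density step: the paper establishes a uniform bound $\sup_{k\geq N}\norm{S_k(T)}<\infty$ (from polynomial boundedness plus the boundedness of the coefficients $\gamma_{r,k}$) and uses it to extend the pointwise convergence $S_k(T)x\to x$ to the closure, whereas you extend the decay $T^m x\to 0$ to the closure directly via power boundedness of $T$ and then feed it into $(\star)$; your variant is marginally leaner in that it dispenses with the uniform bound on the operators $S_k(T)$, but both arguments rest on exactly the same ingredients.
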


\begin{proof}
Consider the polynomial expansion
$$
\prod_{j=1}^{N}(1-\overline{\xi_j}z) = \sum_{i=0}^{N}c_iz^i,\qquad z\in \Ddb,
$$
for some $c_0,c_1,\ldots,c_N\in\Cdb$. For convenience, we set $c_i=0$ if $i>N$.
Observe that 
$$
\Bigl( \sum_{i=0}^{\infty}c_iz^i\Bigr)\Bigl(\sum_{m=0}^{\infty}a_{m}z^m\Bigr)=
\prod_{j=1}^{N}(1-\overline{\xi_j}z)\sum_{m=0}^{\infty}a_{m}z^m = 1,
\qquad z\in\Ddb.
$$
This implies that $c_0a_0=1$ and that for any integer $r\geq 1$,  
$$
\sum_{i+m=r} c_ia_m=0.
$$
For any integer $k\geq N$, we let $S_k\in\P$ be defined by 
$$
S_k(z) = \Bigl( \sum_{i=0}^{N}c_i z^i\Bigr)\Bigl(\sum_{m=0}^{k}a_{m}z^m\Bigr),
$$
and we write it as 
$$
S_k(z) =\sum_{r=0}^{k+N}\gamma_{r,k} z^r.
$$
We note that if $r\leq k$, then $\gamma_{r,k} = \sum_{i+m=r} c_ia_m$. 
Hence $\gamma_{0,k}=1$ and
$\gamma_{r,k}=0$ if $1\leq r\leq k$.  Thus we actually have
$$
S_k(z) = 1 +\sum_{r=k+1}^{k+N}\gamma_{r,k} z^r,
\qquad\hbox{with} \quad  \gamma_{r,k} = \sum_{m=r-N}^{k}c_{r-m}  a_{m}\quad\hbox{for} \ k+1\leq r\leq k+N.
$$
Since $(a_m)_{m\geq 1}$ is a bounded sequence, by Lemma \ref{Boundedness of ak's}, the above expression of $\gamma_{r,k}$
implies that
\begin{equation}\label{3Bounded}
\bigl\{\gamma_{r,k}\, : \,k\geq N,\  k+1\leq r\leq k+N\bigr\}\quad\hbox{is bounded.}
\end{equation} 
Consequently, 
the set $\{S_k(z)\, :\, k\geq N,\, z\in\Ddb\}$ is bounded. Since $T$ is polynomially bounded, this implies that
\begin{equation}\label{3Uniform}
\sup_{k\geq N}\norm{S_k(T)}\,<\infty.
\end{equation}

Let $x\in {\rm Ran}\Bigl(\prod_{j=1}^{N}(I_H-\overline{\xi_j}T)\Bigr)$ and write 
$x=\prod_{j=1}^{N}(I_H-\overline{\xi_j}T)y$
for some $y\in H$. Then for any $k\geq N$, we have
$$
S_k(T)x = \Bigl(I_H+\sum_{r=k+1}^{k+N}\gamma_{r,k}T^r\Bigr)x = 
x +\sum_{r=k+1}^{k+N}\gamma_{r,k}\Bigl(T^r\prod_{j=1}^{N}(I_H-\overline{\xi_j}T)\Bigr)y.
$$
Since $T$ is polynomially bounded, we have 
$$
\sum_{r=0}^\infty \bignorm{T^r
(I_H -\overline{\xi_1}T)^\frac12(I_H -\overline{\xi_2}T)^\frac12\cdots
(I_H -\overline{\xi_N}T)^\frac12 y}^2\,<\infty,
$$
by Theorem  \ref{2B}.
Thus, $\bignorm{T^r(I_H -\overline{\xi_1}T)^\frac12(I_H -\overline{\xi_2}T)^\frac12\cdots
(I_H -\overline{\xi_N}T)^\frac12 y} \to 0$, and hence
$$
\Bigl(T^r\prod_{j=1}^{N}(I_H-\overline{\xi_j}T)\Bigr)y
\longrightarrow 0,
$$
when $r\to\infty$. Using (\ref{3Bounded}) again, we deduce that $S_k(T)x\to x$ when $k\to\infty$.

Now assume that $x\in \overline{\rm Ran}\Bigl(\prod_{j=1}^{N}(I_H-\overline{\xi_j}T)\Bigr)$. It follows from above and from 
the uniform estimate (\ref{3Uniform}) that  $S_k(T)x\to x$ when $k\to\infty$.
Since 
$$
S_k(T)= \sum_{m=0}^{k}a_{m}T^m\prod_{j=1}^{N}(I_H-\overline{\xi}_jT),
$$
this yields the result.
\end{proof}

\begin{lemma}\label{decomposition}
Let $T\in B(H)$ be any power bounded operator. Then $H$ admits a direct sum decomposition
\begin{equation}\label{Dec}
H=\overset{N}{\underset{j=1}{\oplus}}{\rm Ker}(I_H-\overline{\xi_j}T)
\oplus\overline{\rm Ran}\Bigl(\prod_{j=1}^{N}(I_H-\overline{\xi_j}T)\Bigr).
\end{equation}
Furthermore, the $(N+1)$ projections associated with this decomposition 
all belong to the bicommutant of $\{T\}$.
\end{lemma}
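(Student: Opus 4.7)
My plan is to produce the $N+1$ projections via the mean ergodic theorem applied to each of the power-bounded operators $\overline{\xi_j}T$, $j=1,\ldots,N$. Since $H$ is reflexive, von Neumann's mean ergodic theorem gives, for each $j$, strong convergence
$$
P_j:=\lim_{n\to\infty}\frac{1}{n}\sum_{k=0}^{n-1}(\overline{\xi_j}T)^k,
$$
where $P_j$ is the projection onto ${\rm Ker}(I_H-\overline{\xi_j}T)$ parallel to $\overline{\rm Ran}(I_H-\overline{\xi_j}T)$. As a strong limit of polynomials in $T$, each $P_j$ lies in the bicommutant of $\{T\}$, which is strongly closed and commutative.

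I would next verify that $P_iP_j=0$ whenever $i\ne j$. Since $P_i$ commutes with $T$, it preserves ${\rm Ran}(P_j)={\rm Ker}(I_H-\overline{\xi_j}T)$, so $P_iP_jx$ lies in the intersection ${\rm Ker}(I_H-\overline{\xi_i}T)\cap{\rm Ker}(I_H-\overline{\xi_j}T)$; any $x$ in this intersection satisfies $Tx=\xi_ix=\xi_jx$, which forces $x=0$ since $\xi_i\ne\xi_j$. Setting $P:=P_1+\cdots+P_N$ and $Q:=I_H-P$, the orthogonality relations $P_iP_j=\delta_{ij}P_j$ imply that $P$ is a projection and that $Q=\prod_{j=1}^N Q_j$, where $Q_j:=I_H-P_j$ projects onto $\overline{\rm Ran}(I_H-\overline{\xi_j}T)$. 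Both $P$ and $Q$ then lie in the bicommutant of $\{T\}$, and the range of $P$ is the internal direct sum $\bigoplus_{j=1}^N{\rm Ker}(I_H-\overline{\xi_j}T)$.

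The remaining step, which I expect to be the main obstacle, is to identify ${\rm Ran}(Q)$ with $\overline{\rm Ran}(\Pi)$, where $\Pi:=\prod_{j=1}^N(I_H-\overline{\xi_j}T)$. The inclusion $\overline{\rm Ran}(\Pi)\subset{\rm Ran}(Q)$ follows from $Q_j\Pi=\Pi$ for each $j$ (since ${\rm Ran}(\Pi)\subset\overline{\rm Ran}(I_H-\overline{\xi_j}T)={\rm Ran}(Q_j)$), giving $Q\Pi=\Pi$ and ${\rm Ran}(\Pi)\subset{\rm Ran}(Q)$, which is closed. For the reverse inclusion, the idea is to exploit the factorization
$$
Q_j^{(n)}:=I_H-\frac{1}{n}\sum_{k=0}^{n-1}(\overline{\xi_j}T)^k=(I_H-\overline{\xi_j}T)\,B_n^{(j)},
$$
arising from $I_H-(\overline{\xi_j}T)^k=(I_H-\overline{\xi_j}T)\sum_{m=0}^{k-1}(\overline{\xi_j}T)^m$, with $B_n^{(j)}$ a polynomial in $T$. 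Since polynomials in $T$ commute, the product factors as $Q_1^{(n)}\cdots Q_N^{(n)}=\Pi\cdot\prod_{j=1}^NB_n^{(j)}$, placing every $Q_1^{(n)}\cdots Q_N^{(n)}x$ in ${\rm Ran}(\Pi)$. Power-boundedness of $T$ yields a uniform norm bound on the $Q_j^{(n)}$, so by the standard fact that products of uniformly bounded, strongly convergent operator sequences converge strongly, $Q_1^{(n)}\cdots Q_N^{(n)}\to Q$ strongly. For $x\in{\rm Ran}(Q)$ this gives $x=Qx\in\overline{\rm Ran}(\Pi)$, completing the identification and the proof.
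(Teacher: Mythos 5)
Your proof is correct, but it follows a genuinely different route from the paper's. The paper proceeds \emph{inductively}: it applies the mean ergodic theorem to $\overline{\xi_1}T$ on $H$, then restricts $T$ to the invariant subspace $\overline{\mathrm{Ran}}(I_H-\overline{\xi_1}T)$ and applies the mean ergodic theorem to the restriction, using the elementary computation $(I_H-\overline{\xi_1}T)x=(1-\xi_2\overline{\xi_1})x$ for $x\in\mathrm{Ker}(I_H-\overline{\xi_2}T)$ to absorb $\mathrm{Ker}(I_H-\overline{\xi_2}T)$ into $\overline{\mathrm{Ran}}(I_H-\overline{\xi_1}T)$, and so on; the identification of the last summand $\overline{\mathrm{Ran}}\bigl(\prod_j(I_H-\overline{\xi_j}T)\bigr)$ then comes for free from the nested structure. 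You instead produce all $N$ projections $P_j$ at once on the full space, verify $P_iP_j=\delta_{ij}P_j$ directly from $\mathrm{Ker}(I_H-\overline{\xi_i}T)\cap\mathrm{Ker}(I_H-\overline{\xi_j}T)=\{0\}$, and must then separately identify $\mathrm{Ran}(I_H-\sum_jP_j)$ with $\overline{\mathrm{Ran}}\bigl(\prod_j(I_H-\overline{\xi_j}T)\bigr)$ --- you handle this correctly via the factorization of the Ces\`aro averages $Q_j^{(n)}=(I_H-\overline{\xi_j}T)B_n^{(j)}$ together with the fact that a product of uniformly bounded, SOT-convergent sequences converges in SOT. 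Both approaches rest on the mean ergodic theorem and land in the bicommutant by the same strong-limit argument; the paper's induction avoids the need for the factorization trick and the SOT-product lemma, while your version keeps all projections on the ambient space $H$ and makes the range identification an explicit limiting computation rather than a bookkeeping consequence of the recursion.
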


\begin{proof}
The operator $\overline{\xi_1}T$ is power bounded. Since $H$ is reflexive, the 
Mean ergodic theorem (see e.g. \cite[Subsection 2.2.1]{Kre}) asserts that we have a 
direct sum decomposition
$$
H={\rm Ker}(I_H-\overline{\xi_1}T)\oplus\overline{\rm Ran}(I_H-\overline{\xi_1}T).
$$
Moreover, if we let $P\in B(H)$ be the projection with range equal to  
${\rm Ker}(I_H-\overline{\xi_1}T)$ and kernel equal to  
$\overline{\rm Ran}(I_H-\overline{\xi_1}T)$, we have
$$
P(x) = \lim_{n\to\infty}\frac{1}{n+1}\sum_{k=0}^n(\overline{\xi_1}T)^kx,
\qquad x\in H.
$$
Hence $P$ belongs to the bicommutant of $\{T\}$.

Consider the restriction $T_1=T|_{\overline{\rm Ran}(I_H-\overline{\xi_1}T)} \in B\bigl(\overline{\rm Ran}(I_H-\overline{\xi_1}T) \bigr)$. 
Now applying  the Mean ergodic theorem to $T_1$,  
we obtain a direct sum decomposition
$$
\overline{\rm Ran}(I_H-\overline{\xi_1}T) = \bigl({\rm Ker}(I_H-\overline{\xi_2}T)\cap
\overline{\rm Ran}(I_H-\overline{\xi_1}T)\bigr)
\oplus\overline{\rm Ran}\bigl((I_H-\overline{\xi_2}T)(I_H-\overline{\xi_1}T)\bigr).
$$
Moreover the  projections associated with this  decomposition belong to
the bicommutant of $T_1$.

Let  $x\in{\rm Ker}(I_H-\overline{\xi_2}T)$. Then $x=\overline{\xi_2}Tx$ and hence 
$$
(I-\overline{\xi_1}T)x = \overline{\xi_2}Tx-\overline{\xi_1}Tx
=(1-\xi_2\overline{\xi_1})\overline{\xi_2}Tx
=(1-\xi_2\overline{\xi_1})x.
$$
Since $\xi_2\not=\xi_1$, we have $1-\xi_2\overline{\xi_1}\not=0$
and we obtain that 
$$
x=\frac{(I-\overline{\xi_1}T)}{(1-\xi_2\overline{\xi_1})}x. 
$$
This implies that
$x\in \overline{\rm Ran}(I_H-\overline{\xi_1}T).$ Consequently,
${\rm Ker}(I_H-\overline{\xi_2}T)\cap
\overline{\rm Ran}(I_H-\overline{\xi_1}T)$ is equal to ${\rm Ker}(I_H-\overline{\xi_2}T)$,
and hence
$$
H={\rm Ker}(I_H-\overline{\xi_1}T)\oplus{\rm Ker}(I_H-\overline{\xi_2}T)\oplus
\overline{\rm Ran}\bigl((I_H-\overline{\xi_2}T)(I_H-\overline{\xi_1}T)\bigr).
$$
Moreover the projections associated with this direct sum decomposition belong to
the bicommutant of $T$.

Since $\xi_i\neq\xi_j$ for any $1\leq i\neq j\leq N$, the result now follows by 
induction.
\end{proof}

\begin{proof}[Proof of Proposition \ref{3Specific}]
We set 
$$
A= \prod_{j=1}^{N}(I_H-\overline{\xi_j}T)^\frac{1}{2}.
$$
Since $T$ is polynomially bounded, it satisfies a square function estimate
$$
\sum_{k=0}^\infty \bignorm{T^k
(I_H -\overline{\xi_1}T)^\frac12(I_H -\overline{\xi_2}T)^\frac12\cdots
(I_H -\overline{\xi_N}T)^\frac12x}^2 \leq C^2||x||^2,\qquad x\in H,
$$
by Theorem \ref{2B}.
According to Lemma \ref{decomposition}, any 
$x\in H$ can be uniquely written as 
$$
x=x_1+x_2+\cdots+x_N + x_{N+1},
$$
where $x_j\in {\rm Ker}(I_H-\overline{\xi_j}T)$ for $1\leq j\leq N$ and 
$x_{N+1}\in 
\overline{\rm Ran}\Bigl(\prod_{j=1}^{N}(I_H-\overline{\xi_j}T)\Bigr)$. Applying 
the above square function estimate to $x_{N+1}$, we may therefore define 
a bounded operator
$$
J\colon H\longrightarrow \ell^2_N(H)\mathop{\oplus}^2\ell^2_{\mathbb Z}(H)
$$
by 
\begin{align*}
J(x)=(x_1,\ldots,x_N) + \bigl(\ldots, 0\ldots, 0, Ax_{N+1}&,  TAx_{N+1}, TAx_{N+1},
\ldots, \\ & \ T^kAx_{N+1}, T^kAx_{N+1}, T^{k+1}Ax_{N+1},\ldots\bigr),
\end{align*}
In the definition of $J$, the term  $T^kAx_{N+1}$ appears at the $2k$-th position and 
the term $T^{k+1}Ax_{N+1}$ appears at the $(2k+1)$-th position of the sequence, for every $k\geq 0$.

Similarly since $T^*$ is polynomially bounded, it also satisfies a square function estimate
$$
\sum_{k=0}^\infty \bignorm{T^{*k}
(I_H - \xi_1 T^*)^\frac12 (I_H - \xi_2T^*)^\frac12\cdots
(I_H - \xi_N T^*)^\frac12 y}^2  \leq {C'}^2||y||^2,\qquad y\in H.
$$
Moreover Lemma \ref{decomposition} ensures that we have a
direct sum decomposition
$$
H=\overset{N}{\underset{j=1}{\oplus}} {\rm Ker}
(I_H-\xi_jT^*)\oplus\overline{\rm Ran}\Bigl(\prod_{j=1}^{N}(I_H-{\xi_j}T^*)\Bigr).
$$
For any $y\in H$, let 
$$
y=y_1+y_2+\cdots +y_N+ y_{N+1},
$$
with $y_j\in {\rm Ker}(I_H- \xi_jT^*)$ for $1\leq j\leq N$ and 
$y_{N+1}\in \overline{\rm Ran}\Bigl(\prod_{j=1}^{N}(I- \xi_j T^*)\Bigr)$, and 
define a bounded linear operator 
$$
\widetilde{J}\colon
H\longrightarrow \ell^2_N(H)\mathop{\oplus}^2\ell^2_{\mathbb Z}(H)
$$
by 
\begin{align*}
\widetilde{J}(y)
= (y_1,\ldots, y_N) + \bigl(\ldots, 0\ldots, 0, \overline{a_0} A^*y_{N+1}&,  
\overline{a_1} A^*y_{N+1}, \ldots, \\ & \ \overline{a_{2k-1}} T^{*(k-1)}A^*y_{N+1}, 
\overline{a_{2k}} T^{*k}A^*y_{N+1}, 
\ldots\bigr),
\end{align*} 
where the cefficients $a_k$ are given by (\ref{expansion equation}). In this definition,
$\overline{a_{2k-1}} T^{*(k-1)}A^*y_{N+1}$ appears at the 
$(2k-1)$-{th} position for all $k\geq 1$
whereas
$\overline{a_{2k}} T^{*k}A^*y_{N+1}$ appears at the 
$2k$-{th} position for all $k\geq0$.

We set $Q={\widetilde{J}}^*\colon \ell^2_N(H)\mathop{\oplus}^2\ell^2_{\mathbb Z}(H)\to H$ and  
$$
V=D_H\oplus V_H^2\colon \ell^2_N(H)\mathop{\oplus}^2\ell^2_{\mathbb Z}(H)
\longrightarrow
 \ell^2_N(H)\mathop{\oplus}^2\ell^2_{\mathbb Z}(H).
$$
For any integer $n\geq 0$, we have
\begin{align*}
V^nJ(x)=(\xi_1^nx_1,\xi_2^n x_2,\ldots,\xi_N^n x_N) + \bigl(\ldots, *&,\ldots, *, 
T^nAx_{N+1},  T^{n+1}Ax_{N+1}, T^{n+1}Ax_{N+1},
\ldots, \\ & \ T^{n+k}Ax_{N+1}, T^{n+k}Ax_{N+1}, T^{n+k+1}Ax_{N+1},\ldots\bigr),
\end{align*}
where the $*$ stand for terms at negative positions and for every $k\geq 0$,
$T^{n+k}Ax_{N+1}$ appears at the $2k$-th position and $T^{n+k+1}Ax_{N+1}$ appears at the $(2k+1)$-th position, for every $k\geq 0$.

Combining with the defintion of $\widetilde{J}$, we have
\begin{align*}
\langle QV^nJ(x),y\rangle &  = 
\langle V^nJ(x),\widetilde{J}(y)\rangle \\ & =
\sum_{j=1}^N\xi_j^n \langle x_j,y_j\rangle \\
& +\Bigl\langle 
\bigl(T^nAx_{N+1},  T^{n+1}Ax_{N+1}, T^{n+1}Ax_{N+1},
\ldots, \ T^{n+k}Ax_{N+1}, T^{n+k}Ax_{N+1}, \ldots\bigr),\\ &
\qquad \bigl( \overline{a_0} A^*y_{N+1},  
\overline{a_1} A^*y_{N+1}, \ldots,  \ \overline{a_{2k-1}} T^{*(k-1)}A^*y_{N+1}, 
\overline{a_{2k}} T^{*k}A^*y_{N+1}, 
\ldots\bigr)\Bigr\rangle\\ 
 & = 
\sum_{j=1}^N\xi_j^n \langle x_j,y_j\rangle\\
& +\sum_{k=0}^\infty\Bigl(a_{2k}\bigl\langle
T^{n+k}Ax_{N+1}, T^{*k}A^*y_{N+1}\bigr\rangle
+ a_{2k+1}\bigl\langle
T^{n+k+1}Ax_{N+1}, T^{*k}A^*y_{N+1}\bigr\rangle\Bigr).
\end{align*}
This yields
$$
\langle QV^nJ(x),y\rangle   = 
\sum_{j=1}^N\xi_j^n \langle x_j,y_j\rangle+
\sum_{m=0}^\infty a_m \bigl\langle T^{m+n}A^2
x_{N+1},y_{N+1}\bigr\rangle.
$$
Since $x_{N+1}\in \overline{\rm Ran}\Bigl(\prod_{j=1}^{N}(I_H-\overline{\xi_j}T)\Bigr)$ 
we deduce, using Lemma 
\ref{elements in the range}, that
$$
\langle QV^nJ(x),y\rangle   = 
\sum_{j=1}^N\xi_j^n \langle x_j,y_j\rangle+ \langle T^nx_{N+1},y_{N+1}\rangle.
$$
For any $1\leq j\leq N$, $x_j\in {\rm Ker}(I_H-\overline{\xi_j}T)$,
hence $Tx_j = \xi_jx_j$. Thus, $T^nx_j = {\xi_j}^nx_j$ for all $n\geq 0.$
Therefore, we have proved that
$$
\langle QV^nJ(x),y\rangle   = 
\sum_{j=1}^N   \langle T^n x_j,y_j\rangle+ \langle T^nx_{N+1},y_{N+1}\rangle.
$$

It follows from the proof of Lemma \ref{decomposition} that 
$$
\overline{\rm Ran}(I_H-\overline{\xi_1}T)
= \overset{N}{\underset{j=2}{\oplus}}{\rm Ker}(I_H-\overline{\xi_j}T)
\oplus\overline{\rm Ran}\Bigl(\prod_{j=1}^{N}(I_H-\overline{\xi_j}T)\Bigr).
$$
Since $\overline{\rm Ran}(I_H-\overline{\xi_1}T)={\rm Ker}(I_H- \xi_1T^*)^\perp$, this 
implies that $\langle x_j,y_1\rangle=0$ for all $j\geq 2$. Similarly, we have
$\langle x_j,y_i\rangle=0$ for all $1\leq j\not= i\leq N+1$. 

For any $n\geq 0$, $T^n(x) = T^n(x_1)+T^n(x_2)+\cdots+T^n(x_{N+1})$ is the decomposition
of $T^n(x)$ corresponding to the direct sum (\ref{Dec}). Hence we also have
$\langle T^n x_j,y_i\rangle=0$ for all $1\leq j\not= i\leq N+1$.
Consequently,
$$
\sum_{j=1}^N   \langle T^n x_j,y_j\rangle+ \langle T^nx_{N+1},y_{N+1}\rangle
=\langle T^nx,y\rangle.
$$
Thus, the above calculation yields $T^n = QV^nJ.$

The coordinates of $J$ are either the projections 
$x\mapsto x_j$, for $1\leq j\leq N$, the zero operator or the operators
$x\mapsto  T^kAx_{N+1}$, for $k\geq 0$. They all belong to the bicommutant of
$\{T\}$ by the last sentence of Lemma \ref{decomposition}.
\end{proof}

\section{Generalized von Neumann inequalities and joint similarities}\label{4vN} 
In this section we deal with finite commuting families of operators on $H$.
We will extend most of the results of \cite[Section 5]{ArLM1} to 
operators of polygonal type.

We aim at writing a unitary 
dilation for a $d$-tuple $(T_1,\ldots,T_d)$ of commuting elements
of $B(H)$, of which at least $(d-2)$ of them have a
poygonal type and are polynomially
bounded. For this purpose we will appeal to \cite[Lemma 4.1]{ArLM1}, that we state
(and explain) in 
the Hilbertian setting for convenience.

Given a second Hilbert space $K$, we let $K\overset{2}{\otimes} H$
denote the Hilbertian tensor product of $K$ and $H$. We recall that
for any $T\in B(H)$ and $A\in B(K)$, the operator 
$A\otimes T\colon K\otimes H\to K\otimes H$ uniquely extends
to a bounded operator
$$
A\overline{\otimes}T\colon
K\overset{2}{\otimes} H\longrightarrow K\overset{2}{\otimes} H, 
$$
with $\norm{A\overline{\otimes}T}=\norm{A}\norm{T}$. Furthermore,
$A\overline{\otimes}T$ is a unitary if $A$ and $T$ are unitaries.

In the sequel, for any integer $j\geq 1$, we set $K^{\otimes j}=
K\overset{2}{\otimes}\cdots \overset{2}{\otimes} K$ ($j$ copies).

\begin{lemma}\label{4JointDil}
Let $1 \leq m < d$ be integers. Let $T_1,\ldots,T_d\in B(H)$ 
be commuting operators. 
Assume that there exist two Hilbert spaces $K,L$ for
which the following properties hold:

\begin{itemize}
\item[(a)] For every $k=1,\ldots,m$, there exist a unitary $V_k\in B(K)$  
and two bounded operators $J_k \colon H \to K\overset{2}{\otimes} H$ 
and $Q_k \colon K\overset{2}{\otimes} H \to H$ such that
$$
T_k^{n_k} = Q_k (V_k \overline{\otimes} I_H)^{n_k} J_k, \qquad n_k\geq 0.
$$
\item[(b)] There exist  two bounded operators 
$J_{m+1} \colon H \to L$ and $Q_{m+1} \colon L \to H$ 
and commuting unitaries $V_{m+1},\ldots,V_d \in B(L)$ such that
$$
T_{m+1}^{n_{m+1}} \cdots T_d^{n_d} = Q_{m+1} V_{m+1}^{n_{m+1}} 
\cdots V_d^{n_d} J_{m+1}, \qquad n_{m+1},\ldots,n_d\geq 0.
$$
\item[(c)] For every $i=1,\ldots,m$ and $j=1,\ldots,d$, we have
$$
J_i T_j = (I_{K} \overline{\otimes} T_j) J_i.
$$
\end{itemize}
Then there exist commuting unitaries $U_1,\ldots,U_d$ on $K^{\otimes m}
\overset{2}{\otimes} L$, as well as
two bounded operators $J \colon H\to K^{\otimes m}
\overset{2}{\otimes} L$ and $Q \colon K^{\otimes m}
\overset{2}{\otimes} L \to  H$ such that
\begin{equation} \label{2combdil}
T_1^{n_1} \cdots T_d^{n_d} = Q U_1^{n_1}\cdots U_d^{n_d} J, 
\qquad n_1,\ldots,n_d\geq 0.
\end{equation} 
\end{lemma}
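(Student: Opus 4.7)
The plan is to prove the result by induction on $m$, with the base case $m=1$ treated by a direct construction and the inductive step performed by reducing from $m$ to $m-1$.

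For the base case $m=1$, hypothesis (a) provides a single-operator dilation of $T_1$ on $K$, and (b) a joint dilation of $(T_2,\ldots,T_d)$ on $L$. I would work on $K\overline{\otimes}L$ and set $U_1=V_1\overline{\otimes}I_L$ together with $U_j=I_K\overline{\otimes}V_j$ for $j=2,\ldots,d$. Each $U_k$ is unitary as a tensor product of unitaries, and mutual commutation is immediate: $U_1$ acts trivially on the $L$-factor while $U_j$ for $j\geq 2$ acts trivially on the $K$-factor, and $V_{m+1},\ldots,V_d$ commute pairwise by (b). Define
$$
J=(I_K\overline{\otimes}J_2)J_1\colon H\to K\overline{\otimes}L \qquad\hbox{and}\qquad
Q=Q_1(I_K\overline{\otimes}Q_2)\colon K\overline{\otimes}L\to H.
$$
To verify the dilation formula, note that $U_1^{n_1}U_2^{n_2}\cdots U_d^{n_d}=V_1^{n_1}\overline{\otimes}V_2^{n_2}\cdots V_d^{n_d}$. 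Inserting this between $J$ and $Q$, the factors $I_K\overline{\otimes}Q_2$ and $I_K\overline{\otimes}J_2$ combine with (b) to collapse the $L$-side to $Q_2V_2^{n_2}\cdots V_d^{n_d}J_2=T_2^{n_2}\cdots T_d^{n_d}$, leaving $Q_1(V_1^{n_1}\overline{\otimes}T_2^{n_2}\cdots T_d^{n_d})J_1$. Splitting $V_1^{n_1}\overline{\otimes}T_2^{n_2}\cdots T_d^{n_d}=(V_1^{n_1}\overline{\otimes}I_H)(I_K\overline{\otimes}T_2^{n_2}\cdots T_d^{n_d})$, the intertwining condition (c) with $i=1$ pushes the second factor past $J_1$, and the remaining piece $Q_1(V_1^{n_1}\overline{\otimes}I_H)J_1=T_1^{n_1}$ by (a). This yields $T_1^{n_1}\cdots T_d^{n_d}$.

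For the inductive step, assume the conclusion holds for $m-1$ with $m\geq 2$ and that the hypotheses hold at level $m$. Apply the base case to the subfamily $(T_m,T_{m+1},\ldots,T_d)$: (a) with $k=m$ supplies the single dilation of $T_m$, (b) supplies the joint dilation of $(T_{m+1},\ldots,T_d)$, and (c) with $i=m$ and $j=m,\ldots,d$ furnishes the required intertwining of $J_m$. This produces commuting unitaries $W_m,\ldots,W_d$ on $L':=K\overline{\otimes}L$ together with bounded operators $\widetilde J\colon H\to L'$ and $\widetilde Q\colon L'\to H$ jointly dilating $(T_m,\ldots,T_d)$. The family $(T_1,\ldots,T_d)$ then satisfies the hypotheses of the lemma at level $m-1$: (a) for $k=1,\ldots,m-1$ is inherited, the new joint dilation on $L'$ plays the role of (b), and (c) for $i=1,\ldots,m-1$ with $j=1,\ldots,d$ is simply the restriction of the original (c). The inductive hypothesis then delivers commuting unitaries on $K^{\otimes(m-1)}\overline{\otimes}L'=K^{\otimes m}\overline{\otimes}L$ satisfying (\ref{2combdil}), completing the induction.

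The main technical obstacle will be the tensor bookkeeping in the base case: one must carefully track which factors act on which slot so that the $V_k\overline{\otimes}I$ and $I\overline{\otimes}T_j$ pieces are moved past each other only when they act on disjoint slots, and so that the intertwining (c) is invoked with the correct orientation when sliding $I_K\overline{\otimes}T_2^{n_2}\cdots T_d^{n_d}$ past $J_1$. Beyond that, the inductive reduction is essentially automatic, because condition (c) at level $m-1$ only involves the operators $J_i$ with $i\leq m-1$, which are unchanged in the reduction, and the new joint dilation produced by the base case respects the commutation of unitaries required at the next step.
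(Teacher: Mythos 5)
Your argument is correct, and when unwound it produces exactly the same operators $U_k$, $J$, $Q$ that the paper writes down directly. The difference is one of presentation: the paper states the full formulas
$J= (I_{K^{\otimes m}}\overline{\otimes} J_{m+1})\cdots(I_{K}\overline{\otimes} J_2)J_1$, $Q=Q_1(I_K\overline{\otimes} Q_2)\cdots(I_{K^{\otimes m}}\overline{\otimes} Q_{m+1})$, and $U_k = I_{K^{\otimes (k-1)}}\overline{\otimes} V_k\overline{\otimes} I_{K^{\otimes (m-k)}}\overline{\otimes}I_L$ (resp.\ $I_{K^{\otimes m}}\overline{\otimes} V_k$ for $k>m$) in one stroke and then defers the verification of \eqref{2combdil} to the proof of \cite[Lemma 4.1]{ArLM1}, whereas you organize the construction as an induction on $m$ whose base case $m=1$ you verify in full. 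Your base-case computation is right: after the tensor-slot factorization $(I_K\overline{\otimes} Q_2)(V_1^{n_1}\overline{\otimes} V_2^{n_2}\cdots V_d^{n_d})(I_K\overline{\otimes} J_2)=(V_1^{n_1}\overline{\otimes} I_H)(I_K\overline{\otimes} T_2^{n_2}\cdots T_d^{n_d})$ via (b), the intertwining (c) pushes $I_K\overline{\otimes} T_2^{n_2}\cdots T_d^{n_d}$ past $J_1$, and (a) collapses what remains. Your inductive step is likewise sound: applying the base case to $(T_m,\ldots,T_d)$ uses only (a) at $k=m$, (b), and (c) at $i=m$, and the resulting joint dilation on $L'=K\overline{\otimes}L$ feeds correctly into the level-$(m-1)$ hypotheses because (c) at level $m-1$ only involves $J_1,\ldots,J_{m-1}$, which are unchanged. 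The payoff of your route is that it is self-contained and makes the verification transparent at the cost of some associativity bookkeeping for $K^{\otimes(m-1)}\overline{\otimes}(K\overline{\otimes}L)\simeq K^{\otimes m}\overline{\otimes}L$; the paper's route is shorter but relies on an external reference for the key identity.
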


\begin{proof}
For any $k=1,\ldots,m$,  define $U_k$ on $K^{\otimes m}
\overset{2}{\otimes} L$ by
$$
U_k = I_{K^{\otimes (k-1)}}\overline{\otimes} V_k\overline{\otimes}
I_{K^{\otimes (m-k)}}\overline{\otimes}I_L.
$$
Next for $k=m+1,\ldots,d$,  define $U_k$ on $K^{\otimes m}
\overset{2}{\otimes} L$ by
$$
U_k = I_{K^{\otimes m}}\overline{\otimes} V_k.
$$
Then $U_1,\ldots,U_d$ are commuting unitaries.

Next define  $J \colon H\to K^{\otimes m}
\overset{2}{\otimes} L$ and $Q \colon K^{\otimes m}
\overset{2}{\otimes} L \to  H$ by
$$
J= (I_{K^{\otimes m}}\overline{\otimes} J_{m+1})
(I_{K^{\otimes (m-1)}}\overline{\otimes} J_m)\cdots
(I_{K^{\otimes 2}}\overline{\otimes} J_3)
(I_{K}\overline{\otimes} J_2)J_1
$$
and
$$
Q=Q_1(I_K\overline{\otimes} Q_2)
(I_{K^{\otimes 2}}\overline{\otimes} Q_3)\cdots
(I_{K^{\otimes (m-1)}}\overline{\otimes} Q_m)
(I_{K^{\otimes m}}\overline{\otimes} Q_{m+1}).
$$
Then the proof of  \cite[Lemma 4.1]{ArLM1} shows that the operators
$J,Q,U_1,\ldots,U_d$
satisfy the dilation property (\ref{2combdil}).
\end{proof}

Let $d\geq 1$ be an integer. Extending  definitions from Section
\ref{2Poly}, we let $\P_d$ denote the algebra of all complex polynomials in $d$ variables. 
Further given any
non-empty bounded open set $\Omega\subset\Cdb^d$ and any $\phi\in\P_d$,
we set
$\norm{\phi}_{\infty,\Omega}=\sup\bigl\{\vert\phi(\lambda)\vert\, :\,
\lambda\in\Omega\bigr\}$.

Let $(T_1,\ldots,T_d)$ be a $d$-tuple of commuting operators on $H$. We say that 
$(T_1,\ldots,T_d)$ is similar to a $d$-tuple of contractions if  there 
exists an invertible $S\in B(H)$ such that $S^{-1}T_k S$ is a contraction
for all $k=1,\ldots, d$. Note that there exist couples $(T_1,T_2)$ of commuting 
opertors such that $T_1$ and $T_2$ are each similar to a contraction 
but $(T_1,T_2)$ is not similar to a couple of contractions. That is,
there exist invertibles $S_1,S_2\in B(H)$ such that 
\begin{equation}\label{4Couple}
\norm{S_1^{-1}T_1 S_1}\leq 1
\qquad\hbox{and}\qquad
\norm{S_2^{-1}T_2 S_2}\leq 1,
\end{equation}
but (\ref{4Couple}) cannot be achieved with $S_2=S_1$. We refer to
\cite{P2} for an example.

\begin{theorem}\label{4UnitDil}
Assume that $d\geq 3$ and let $T_1,\ldots,T_d\in B(H)$ 
be commuting operators such that:
\begin{itemize}
\item[(a)] For every $k=1,\ldots,d-2$, $T_k$ is polynomially bounded
with a polygonal type;
\item[(b)] The couple $(T_{d-1},T_d)$ is similar to a couple of contractions. 
\end{itemize}
Then the following two properties hold.
\begin{itemize}
\item [(1)]
There exist a Hilbert space $H'$, two bounded operators $J\colon H\to H'$
and $Q\colon H'\to H$ and commuting unitaries $U_1,\ldots,U_d$ on $H'$ such that 
$$
T_1^{n_1} \cdots T_d^{n_d} = Q U_1^{n_1} \cdots U_d^{n_d} J, 
\qquad n_1,\ldots,n_d\geq 0.
$$
\item [(2)]
There exists a constant $C\geq 1$ such that
$$
\norm{\phi(T_1,\ldots,T_d)}\leq C\norm{\phi}_{\infty,{\mathbb D}^d},
\qquad \phi\in\P_d.
$$
\end{itemize}
\end{theorem}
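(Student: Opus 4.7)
The plan is to reduce part (1) to an application of Lemma \ref{4JointDil} with $m=d-2$, and then to derive (2) from (1) by a standard spectral-theoretic argument on commuting unitaries. Let us describe the three ingredients needed to apply Lemma \ref{4JointDil}.

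First, for the single-operator dilations required in hypothesis (a) of Lemma \ref{4JointDil}: by Lemma \ref{RittE}(2), each $T_k$ (for $k=1,\ldots,d-2$) is Ritt$_E$ for any finite $E\subset\Tdb$ containing $\sigma(T_k)\cap\Tdb$. Since all these spectral traces are finite, I would fix the single finite set $E=\bigcup_{k=1}^{d-2}\sigma(T_k)\cap\Tdb=\{\xi_1,\ldots,\xi_N\}$ so that every $T_k$ is simultaneously Ritt$_E$, and then apply Proposition \ref{3Specific} to each $T_k$. The key observation is that the unitary $D_H\oplus V_H^2$ provided by Proposition \ref{3Specific} acts on $\ell^2_N(H)\mathop{\oplus}^2\ell^2_{\mathbb Z}(H)$ \emph{diagonally in the $H$-variable}: identifying this space with $K\otimes^2 H$ where $K=\ell^2_N\mathop{\oplus}^2\ell^2_{\mathbb Z}$, one has $D_H\oplus V_H^2=V\overline{\otimes}I_H$ for a unitary $V\in B(K)$ (namely the direct sum of the scalar diagonal with entries $\xi_j$ and the square of the scalar bilateral shift). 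Thus the dilations of $T_1,\ldots,T_{d-2}$ live on a common space $K\overset{2}{\otimes}H$, each with a unitary of the required form $V_k\overline{\otimes}I_H$.

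Second, for hypothesis (b) of Lemma \ref{4JointDil}: by hypothesis the couple $(T_{d-1},T_d)$ is similar to a couple of commuting contractions via some invertible $S\in B(H)$. Ando's theorem supplies a joint unitary dilation $(W_{d-1},W_d)$ on some Hilbert space $L_0$, with an isometry $\iota\colon H\to L_0$ and the orthogonal projection $P_H$ onto its range. Composing with $S$ on both sides, I would set $L=L_0$, $J_{m+1}=\iota S^{-1}$, $Q_{m+1}=SP_H$, and $V_k=W_k$ for $k=d-1,d$ to produce the required joint dilation of $(T_{d-1},T_d)$. Third, for hypothesis (c): by Proposition \ref{3Specific}, every coordinate of $J_k$ belongs to the bicommutant of $\{T_k\}$. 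Since $T_j$ commutes with $T_k$ for each $j$, $T_j$ lies in the commutant of $\{T_k\}$, hence it commutes with every coordinate of $J_k$. Reading this coordinatewise gives exactly $J_k T_j=(I_K\overline{\otimes}T_j)J_k$.

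Once Lemma \ref{4JointDil} is applied and (1) is established, part (2) follows by a short routine argument: for any $\phi\in\P_d$ we have $\phi(T_1,\ldots,T_d)=Q\,\phi(U_1,\ldots,U_d)J$. Since $U_1,\ldots,U_d$ are commuting unitaries, the spectral theorem for commuting normal operators yields $\norm{\phi(U_1,\ldots,U_d)}\leq\norm{\phi}_{\infty,\Tdb^d}\leq\norm{\phi}_{\infty,\Ddb^d}$ (the latter by the maximum modulus principle), hence $\norm{\phi(T_1,\ldots,T_d)}\leq C\norm{\phi}_{\infty,\Ddb^d}$ with $C=\norm{Q}\norm{J}$. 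The main obstacle is the verification that the dilation from Proposition \ref{3Specific} really falls in the framework of Lemma \ref{4JointDil}: specifically, the tensor-factorisation $D_H\oplus V_H^2=V\overline{\otimes}I_H$ on a common space $K\overset{2}{\otimes}H$ and, more subtly, the intertwining property (c) — both of which hinge on the two precise structural statements built into Proposition \ref{3Specific}, namely the explicit form of the dilation space and the bicommutant localisation of the coordinates of $J$.
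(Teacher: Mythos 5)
Your proposal follows essentially the same route as the paper's proof: reduce part (1) to Lemma \ref{4JointDil} with $m=d-2$, use Ando's theorem after conjugating by the similarity to verify hypothesis (b), use the tensor-factorisation $D_H\oplus V_H^2=(D\oplus V^2)\overline{\otimes}I_H$ from Proposition \ref{3Specific} to verify hypothesis (a), use the bicommutant localisation of the coordinates of $J_k$ to verify hypothesis (c), and then deduce (2) from (1) by the standard spectral estimate for commuting unitaries. The only nit is that your choice $E=\bigcup_{k=1}^{d-2}\sigma(T_k)\cap\Tdb$ could in principle be empty (when every $T_k$ has spectrum inside the open disc), in which case Lemma \ref{RittE}(2) requires you to adjoin an arbitrary point of $\Tdb$ so that $E$ is a non-empty finite set; this is a one-line fix and does not affect the argument.
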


\begin{proof}
To prove (1),  we will show that $(T_1,\ldots,T_d)$ satisfies the assumptions of Lemma \ref{4JointDil}
for $m=d-2$.  Since $(T_{d-1},T_d)$ is similar to a couple of contractions,
there exists an invertible operator $S\in B(H)$ such that $S^{-1}T_{d-1}S$ and $S^{-1}T_dS$ are contractions. 
These contractions commute. Hence by Ando's dilation theorem (see \cite{A}
or \cite[Theorem 1.2]{P0}),
there exist a Hilbert space $L$,  an isometry $J_0\colon H\to L$ and commuting unitaries
$V_{d-1}, V_d$ on $L$ such that 
$$
(S^{-1}T_{d-1}S)^{n_{d-1}}(S^{-1}T_dS)^{n_d} = J_0^* (V_{d-1}^{n_{d-1}}V_d^{n_d})J_0,
\qquad n_{d-1},n_d\geq 0.
$$
This readily implies that $(T_{d-1},T_d)$ satisfies the assumption (b) of Lemma \ref{4JointDil},
with 
$$
J_{d-1}=J_0S^{-1}\qquad\hbox{and}\qquad Q_{d-1} =  SJ_0^*.
$$

Applying Lemma \ref{RittE}, we let $E=\{\xi_1,\ldots,\xi_N\}\subset \Tdb$ such that
$T_k$ is a Ritt$_E$ operator for every $1\leq k\leq d-2$. We consider $V_H$ and $D_H$ defined by
(\ref{3VH}) and (\ref{3DH}) and we introduce their scalar versions
$V=V_{\mathbb C}\in B(\ell^2_{\mathbb Z})$ and $D=D_{\mathbb C}\in B(\ell^2_N)$.

For any index set $I$, we have a natural identification $\ell^2_I(H)\simeq  
\ell^2_I\overset{2}{\otimes} H$. Hence we may write
$$
\ell^2_N(H)\mathop{\oplus}^2\ell^2_{\mathbb Z}(H) = K
\overset{2}{\otimes} H,
\qquad\hbox{with}\ K=\ell^2_N\mathop{\oplus}^2\ell^2_{\mathbb Z}.
$$
Under this identification, we have
$$
D_H\oplus V_H^2 = (D\oplus V^2)\overline{\otimes} I_H.
$$
Applying Proposition \ref{3Specific} with every  $1\leq k\leq d-2$, we find
bounded operators $J_k\colon H\to K\overset{2}{\otimes} H$
and $Q_k\colon  K\overset{2}{\otimes} H\to H$ such that
$(T_1,\ldots, T_{d-2})$ satisfies the assumption (a) of Lemma \ref{4JointDil},
with $V_k =D\oplus V^2$, and the coordinates of $J_k$ belong to the bicommutant of
$\{T_k\}$.
Since $T_1,\ldots, T_{d}$ are commuting, the latter property  ensures that
$(T_1,\ldots,T_d)$ satisfies the assumption (c) of Lemma \ref{4JointDil}.
This completes the proof of (1).

Using (1), we may write
$$
\phi(T_1,T_2,\ldots,T_d) = Q\phi(U_1,U_2,\ldots, U_d)J
$$
for all $\phi\in\P_d$.  This implies 
$$
\norm{\phi(T_1,T_2,\ldots,T_d)}\leq\norm{J}\norm{Q}\norm{\phi(U_1,U_2,\ldots, U_d)}.
$$
Hence
$||\phi(T_1,T_2,\ldots,T_d)||\leq C||\phi||_{\infty,\mathbb {D}^d},$ with $C=\norm{J}\norm{Q}$, for all polynomial 
$\phi\in\P_d$.
\end{proof}

The following corollary asserts that a von Neumann type inequality for
a commuting $d$-tuple of 
contractions holds if at least $d-2$ of these contractions have a polygonal type.
This is  a straightforward consequence of Theorem \ref{4UnitDil}.

\begin{corollary}\label{4Contr}
Assume that $d\geq 3$ and let $T_1,\ldots,T_d\in B(H)$ 
be commuting contractions 
such that $T_1,\ldots, T_{d-2}$ have
a polygonal type. Then there exists a constant $C\geq 1$ such that
\begin{equation}\label{4MVN}
\norm{\phi(T_1,\ldots,T_d)}\leq C\norm{\phi}_{\infty,{\mathbb D}^d},
\qquad \phi\in\P_d.
\end{equation}
\end{corollary}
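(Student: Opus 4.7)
The proof is an immediate verification that the hypotheses of Theorem \ref{4UnitDil} are satisfied, followed by quoting its conclusion (2).

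First I would check condition (a) of Theorem \ref{4UnitDil}: for each $k=1,\ldots,d-2$, the operator $T_k$ is a contraction by assumption, hence by von Neumann's inequality $T_k$ is polynomially bounded (with constant $1$), and by hypothesis $T_k$ has a polygonal type. So (a) holds.

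Next I would check condition (b): since $T_{d-1}$ and $T_d$ are themselves contractions, the couple $(T_{d-1},T_d)$ is trivially similar to a couple of contractions via the identity operator $S=I_H$. So (b) holds.

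Therefore Theorem \ref{4UnitDil} applies and its conclusion (2) gives exactly the desired inequality (\ref{4MVN}). No further step is needed; the only subtlety is recognizing that contractions automatically satisfy the polynomial boundedness hypothesis required by Theorem \ref{4UnitDil}(a), which is precisely von Neumann's classical inequality on Hilbert space. There is no real obstacle in this deduction.
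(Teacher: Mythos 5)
Your proof is correct and matches the paper's intent exactly: the paper simply states that Corollary \ref{4Contr} is a straightforward consequence of Theorem \ref{4UnitDil}, and you have filled in the two easy verifications (von Neumann's inequality gives polynomial boundedness of the contractions $T_1,\ldots,T_{d-2}$, and $(T_{d-1},T_d)$ is trivially similar to a couple of contractions). Nothing more is needed.
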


The following is a multivariable analogue of the equivalence 
``$(i)\Leftrightarrow(ii)$" of Theorem \ref{2DL}.

\begin{corollary}\label{4VNI}
Let $H$ be a Hilbert space, let $d\geq 1$ and let $T_1,\ldots,T_d\in B(H)$ 
be commuting, polynomially bounded operators with
a polygonal type. Then $(T_1,\ldots,T_d)$ is similar to a $d$-tuple 
of contractions.
\end{corollary}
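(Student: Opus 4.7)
My plan is to reduce the statement to Theorem~\ref{4UnitDil} via a trivial augmentation and then invoke Paulsen's similarity theorem. The case $d=1$ is precisely Theorem~\ref{2DL}, so I assume $d\geq 2$. The idea is to view $(T_1,\ldots,T_d)$ as sitting inside the augmented $(d+1)$-tuple $(T_1,\ldots,T_d,0)$, thereby bringing the problem into the regime where Theorem~\ref{4UnitDil} applies.

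I first verify the hypotheses of Theorem~\ref{4UnitDil} for $(T_1,\ldots,T_d,0)$ (with parameter $d'=d+1$). Hypothesis (a) requires $T_1,\ldots,T_{d-1}$ to be polynomially bounded of polygonal type, which holds by assumption. For hypothesis (b), I must show that $(T_d,0)$ is similar to a couple of contractions: Theorem~\ref{2DL} applied to $T_d$ yields an invertible $S_d\in B(H)$ with $S_d^{-1}T_dS_d$ contractive, and then $(S_d^{-1}T_dS_d,0)$ is a commuting couple of contractions. Theorem~\ref{4UnitDil}(1) then furnishes commuting unitaries $U_1,\ldots,U_{d+1}$ on some Hilbert space $H'$ and bounded operators $J\colon H\to H'$, $Q\colon H'\to H$ such that
$$
T_1^{n_1}\cdots T_d^{n_d}\cdot 0^{n_{d+1}}=QU_1^{n_1}\cdots U_{d+1}^{n_{d+1}}J,\qquad n_1,\ldots,n_{d+1}\geq 0.
$$
Taking $n_{d+1}=0$ and extending by linearity gives $\phi(T_1,\ldots,T_d)=Q\phi(U_1,\ldots,U_d)J$ for all $\phi\in\P_d$.

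Next I convert the dilation into a complete boundedness statement. Since $f\mapsto f(U_1,\ldots,U_d)$ is a $\ast$-representation of $C(\Tdb^d)$ on $H'$, it is automatically completely contractive; moreover, by the maximum modulus principle (applied entrywise to matrix-valued polynomials), the norm on $M_n(\P_d)$ inherited from $C(\Tdb^d)$ coincides with the norm inherited from $A(\Ddb^d)$. Combining these facts with the factorization above shows that the homomorphism $\rho\colon\P_d\to B(H)$ given by $\rho(\phi)=\phi(T_1,\ldots,T_d)$ extends to a completely bounded unital homomorphism $A(\Ddb^d)\to B(H)$ with $\cbnorm{\rho}\leq\norm{J}\norm{Q}$. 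Paulsen's similarity theorem (see \cite[Chapter 9]{P0}) then yields an invertible $S\in B(H)$ for which $\phi\mapsto S^{-1}\rho(\phi)S=\phi(S^{-1}T_1S,\ldots,S^{-1}T_dS)$ is completely contractive on $A(\Ddb^d)$. Evaluating at $\phi=z_k$ gives $\norm{S^{-1}T_kS}\leq 1$ for every $k$, which is the desired joint similarity.

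The main obstacle is the standard (but non-trivial) invocation of Paulsen's similarity theorem together with the organizational trick of augmenting by the zero operator. The latter simultaneously lets Theorem~\ref{4UnitDil} apply when $d=2$ and avoids the otherwise circular task of first establishing that $(T_{d-1},T_d)$ is similar to a couple of contractions. Once these are in place, no further computation is needed.
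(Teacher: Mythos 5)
Your proof is correct and follows the paper's approach: augment the $d$-tuple so that Theorem~\ref{4UnitDil} applies, extract a unitary dilation of $(T_1,\ldots,T_d)$, and conclude via Paulsen's similarity theorem (the paper cites the analogous step in \cite[Theorem 5.1]{ArLM1} rather than spelling it out). The only cosmetic difference is that you append a single zero operator, forming a $(d+1)$-tuple and treating $d=1$ separately, whereas the paper appends two copies of $I_H$ to form the $(d+2)$-tuple $(T_1,\ldots,T_d,I_H,I_H)$, which covers all $d\geq 1$ uniformly and makes hypothesis~(b) of Theorem~\ref{4UnitDil} immediate without the extra appeal to Theorem~\ref{2DL}.
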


\begin{proof} 
Let us apply Theorem \ref{4UnitDil} to the
$(d+2)$-tuple  $(T_1,T_2,\ldots,T_d,I_H,I_H)$. 
For some $Q, J$  and commuting unitaries $U_1,U_2,\ldots,U_d$, we may write
$$
T_1^{n_1}T_2^{n_2}\ldots T_d^{n_d} = QU_1^{n_1}U_2^{n_2}\ldots U_d^{n_d}J,
\qquad n_1,\ldots,n_d\geq 0.
$$ 
Then the argument in Part (3) of \cite[Theorem 5.1]{ArLM1} shows the existence of an
invertible operator $S\in B(H)$ such that $S^{-1}T_jS$ is a contraction for any $j=1,2,\ldots,d$.
We skip the details.
\end{proof}

\section{A multivariable poygonal functional calculus}\label{FC}
The aim of this section is to improve the inequality (\ref{4MVN})
in the case when $T_1,\ldots,T_d$ are all of polygonal type. This will  
provide  a  multivariable analogue of the equivalence 
``$(i)\Leftrightarrow(iii)$" of Theorem \ref{2DL}.

\begin{theorem}\label{5Poly-FC}
Let $d\geq 1$ and let $T_1,\ldots,T_d\in B(H)$ 
be commuting, polynomially bounded operators with
a polygonal type. Then there exist a polygon 
$\Delta\subset\Ddb$ and 
a constant $C\geq 1$ such that
$$
\norm{\phi(T_1,\ldots,T_d)}\leq C\norm{\phi}_{\infty,\Delta^d},
\qquad \phi\in\P_d.
$$
\end{theorem}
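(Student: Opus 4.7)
The plan is to reduce to the case of commuting contractions of polygonal type, then exploit a Franks-McIntosh type decomposition from \cite{BLM} to express each polynomial as a superposition of pieces handled by a joint sectorial $H^\infty$ calculus on a product of sectors.

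\textbf{Step 1 (Reduction and common polygon).} By Corollary~\ref{4VNI}, $(T_1,\ldots,T_d)$ is similar to a $d$-tuple of contractions via some invertible $S\in B(H)$. Since $R(z,S^{-1}T_kS)=S^{-1}R(z,T_k)S$ and $\sigma(S^{-1}T_kS)=\sigma(T_k)$, the polygonal type of each $T_k$ is preserved under this similarity. At the cost of multiplying the final constant by $\norm{S}\norm{S^{-1}}$, I therefore assume each $T_k$ is both a contraction and of polygonal type. Letting $E=\bigcup_{k=1}^d(\sigma(T_k)\cap\Tdb)\subset\Tdb$, each $T_k$ is Ritt$_E$ by Lemma~\ref{RittE}(2). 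Applying \cite[Lemma 2.8]{BLM} to each $T_k$ and taking the maximum of the resulting radii, I obtain $r\in(0,1)$ such that $\sigma(T_k)\subset\overline{E_r}$ for every $k$. I then fix a polygon $\Delta\subset\Ddb$ with $E_r\subset\Delta$ and $\overline{\Delta}\cap\Tdb=E$. By Lemma~\ref{2Sector}, for each $\xi\in E$ and each $k$ the operator $I_H-\overline{\xi}T_k$ is sectorial of some type strictly less than $\pi/2$.

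\textbf{Step 2 (Franks-McIntosh decomposition and joint calculus).} The Franks-McIntosh type decomposition from \cite{BLM} splits any $\phi\in H^\infty(\Delta)$ as a finite sum $\phi=\sum_{\xi\in E}\phi_\xi$, where each $\phi_\xi$ is holomorphic on a region attached to $\xi$ and $\phi_\xi(T)$ is computed via the sectorial functional calculus of $I_H-\overline{\xi}T$, with a norm bound by $\norm{\phi}_{\infty,\Delta}$ up to a multiplicative constant. Iterating in each variable, I write every $\phi\in\P_d$ as
$$
\phi=\sum_{(\xi_1,\ldots,\xi_d)\in E^d}\phi_{\xi_1,\ldots,\xi_d},
$$
where each piece, evaluated at $(T_1,\ldots,T_d)$, reduces to the \emph{joint} $H^\infty$ calculus of the commuting sectorial family $\{I_H-\overline{\xi_k}T_k:1\leq k\leq d\}$ on a product of sectors of aperture $<\pi/2$. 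Since polynomial boundedness passes to adjoints, Theorem~\ref{2B} provides matching square function estimates for both $T_k$ and $T_k^*$ in every variable; these assemble into a bounded joint sectorial calculus along Kalton-Weis lines, controlling each piece by $\norm{\phi}_{\infty,\Delta^d}$. Summing over the finite index set $E^d$ yields the desired inequality with a constant $C$ depending on $d$, on $|E|$, on $\norm{S}\norm{S^{-1}}$, and on the single-variable square function constants.

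\textbf{Main obstacle.} The delicate point is Step 2: passing from the single-variable Bouabdillah square function estimates of Theorem~\ref{2B} to a joint bounded $H^\infty$ calculus for a commuting family of sectorial operators of type $<\pi/2$ on a product of sectors. This cannot be obtained by a direct application of a scalar multivariable holomorphic functional calculus, since no such calculus is a priori available at the product-of-sectors level; one must iterate the Franks-McIntosh decomposition one variable at a time and control the cross-terms by successive square function inequalities for the $T_k$ and their adjoints, keeping track of the fact that the operators involved are only polynomially bounded (not contractive after the reduction to sectoriality). Ensuring that this iteration produces uniform constants and that the final finite sum over $E^d$ remains bounded by $\norm{\phi}_{\infty,\Delta^d}$ is where the bulk of the technical work resides.
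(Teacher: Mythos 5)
Your Step 1 (common $E$, common $r$, polygon $\Delta$ with $E_r\subset\Delta$) matches the paper, and the initial reduction to contractions via Corollary~\ref{4VNI} is legitimate though unused in what follows. The problem is Step 2, and the gap you flag at the end is real, not a technicality to be smoothed over.

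First, your description of the decomposition is not what \cite[Proposition 3.4]{BLM} provides. There is no finite splitting $\phi=\sum_{\xi\in E}\phi_\xi$ into pieces localized near each peripheral point and handled by the sectorial calculus of $I_H-\overline{\xi}T$. What that result gives is three \emph{infinite} sequences $(\theta_i)$, $(\phi_i)$, $(\psi_i)$ in $H^\infty_0(\Ddb)$ satisfying $1=\sum_{i\geq 1}\theta_i\phi_i\psi_i$ on $\Ddb$, together with the absolute summability $\sup_z\sum_i|\phi_i(z)|<\infty$, $\sup_z\sum_i|\psi_i(z)|<\infty$, a uniform $L^\infty$ bound on $(\theta_i)$, and a uniform Cauchy-type integrability bound for $\theta_i/\prod_j|\xi_j-z|$ on $\partial E_r$. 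The three-sequence structure is essential and does not reduce to a sum over $E$ or $E^d$.

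Second, and more seriously, the crux of your plan is to assemble a ``bounded joint sectorial $H^\infty$ calculus along Kalton--Weis lines'' for the commuting family $\{I_H-\overline{\xi_k}T_k\}$ on a product of sectors, starting only from the one-variable square function estimates of Theorem~\ref{2B}. You explicitly identify this as the main obstacle, and rightly so: no such multivariable sectorial calculus is established in the paper, in \cite{BLM}, or in \cite{B}, and it is not a direct consequence of the one-variable estimates. The paper's proof never builds such a calculus. Instead, after tensoring the scalar identity over $d$ independent variables, it writes
$$
\phi(T_1,\ldots,T_d)=\sum_{i_1,\ldots,i_d}\phi(T_1,\ldots,T_d)\prod_k\theta_{i_k}(T_k)\cdot\prod_k\phi_{i_k}(T_k)\cdot\prod_k\psi_{i_k}(T_k),
$$
bounds each factor $\phi(T_1,\ldots,T_d)\prod_k\theta_{i_k}(T_k)$ uniformly by $\norm{\phi}_{\infty,(\partial E_r)^d}$ via a Cauchy integral over $(\partial E_r)^d$ combined with the resolvent bound of \cite[Lemma 2.8]{BLM}, and then concludes by Cauchy--Schwarz, pairing the square function estimate $\bigl(\sum_{i_1,\ldots,i_d}\norm{\prod_k\phi_{i_k}(T_k)x}^2\bigr)^{1/2}\lesssim\norm{x}$ against the dual one for $\psi_{i_k}(T_k)^*$. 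This is a concrete bilinear estimate; no joint $H^\infty$ calculus theorem is invoked or needed. As it stands, your proposal replaces the decisive step with an unproven black box, so the argument does not go through.
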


\begin{proof}
The scheme of proof will be similar to the one of
\cite[Theorem 6.2]{Ar} so we will be deliberately quick on some arguments.
We need ingredients (different from those in
\cite{Ar}) that we now introduce.

Let $E=\{\xi_1,\ldots,\xi_N\}\subset \Tdb$ such that
$T_k$ is a Ritt$_E$ operator for every $1\leq k\leq d$. 
We will use the $H^\infty$-functional calculus of
Ritt$_E$ operators, for which we refer to \cite[Section 3]{BLM}.
Let $H^\infty_{0}(\Ddb)$ be the space of all bounded analytic
functions $\phi\colon \Ddb\to\Cdb$  such that there exist
$s,c>0$ for which
$$
\vert\phi(z)\vert \leq c\prod_{j=1}^N\vert z-\xi_j\vert^s,\qquad z\in\Ddb.
$$
For any $1\leq k\leq d$ and any $\phi\in H^\infty_{0}(\Ddb)$,
$\phi(T_k)$ is a well-defined element of $B(H)$. Since 
$T_k$ is polynomially bounded, the argument in the proof
of  \cite[Proposition 2.5]{LM2} shows as well that 
\begin{equation}\label{5PB}
\norm{\phi(T_k)}\leq K\norm{\phi}_{\infty,{\mathbb D}},
\qquad \phi\in H^\infty_{0}(\Ddb),
\end{equation}
for some constant $K\geq 1$ not depending on $\phi$.

Let $F_0\in\P_d$ be defined by
$$
F_0(z_1,\ldots,z_d) = \prod_{k=1}^d\prod_{j=1}^N(z_k-\xi_j),
$$
and let $\P_{d,0}\subset\P_d$ be the ideal generated by $F_0$. Arguing as 
in the proof of \cite[Proposition 3.4]{BLM}, we see that to prove  Theorem
\ref{5Poly-FC}, it suffices to find a polygon 
$\Delta\subset\Ddb$ and 
a constant $C'\geq 1$ such that
\begin{equation}\label{5Goal}
\norm{\phi(T_1,\ldots,T_d)}\leq C'\norm{\phi}_{\infty,\Delta^d},
\qquad \phi\in\P_{d,0}.
\end{equation}

Recall that for any $r\in(0,1)$, we let $E_r$ be the interior of the convex hull of $E$
and of the disc $\{\lambda\in\Cdb\, :\,\vert\lambda\vert <r\}$.
According to  \cite[Proposition 3.4]{BLM}, 
there exist three sequences
$(\theta_i)_{i\geq 1}$, $(\phi_i)_{i\geq 1}$ and $(\psi_i)_{i\geq 1}$ of 
$H_{0}^\infty(\Ddb)$ such that:
\begin{align*}
&({\rm a})\ \sup\limits_{z \in {\mathbb D}} \sum\limits_{i = 1}^\infty\vert \phi_i(z)\vert <\infty 
\qquad\hbox{and}\qquad
\sup\limits_{z \in  {\mathbb D}} \sum\limits_{i = 1}^\infty \vert\psi_i(z)\vert <\infty;\qquad\qquad\\
&({\rm b}) \ \sup\limits_{i \geq 1}\, \sup\limits_{z \in  {\mathbb D}} 
\vert \theta_i(z)\vert <\infty;\qquad\qquad\\
&({\rm c})\ \forall\, r\in (0,1), 
\quad \sup\limits_{i\geq 1} \int_{\partial E_{r}} \frac {\vert \theta_i(z)\vert}
{\prod\limits_{j=1}^N \vert\xi_j -z\vert} \,\vert dz\vert< \infty;
\qquad\qquad\\
&({\rm d})\ \forall\, z \in \Ddb,\quad 1 = \sum\limits_{i = 1}^\infty \theta_i(z)\phi_i(z)\psi_i(z).\qquad\qquad
\end{align*}

We claim that there exists a constant $C>0$ such that
\begin{equation}\label{5SF}
\Bigl(\sum_{i=1}^\infty\norm{\phi_i(T_k)x}^2\Bigr)^\frac12\leq C\norm{x},
\end{equation}
for all $x\in H$ and $1\leq k\leq d$.
Indeed, for any $n\geq 1$, we have
$$
\Bigl(\sum_{i=1}^n\norm{\phi_i(T_k)x}^2\Bigr)^\frac12 \leq\sup_{\varepsilon_i=\pm 1}\Bignorm{
\sum_{i=1}^n \varepsilon_i \phi_i(T_k)x}.
$$
Furthermore for any $\varepsilon_i=\pm 1$, we have
$$
\Bignorm{\sum_{i=1}^n \varepsilon_i \phi_i(T_k)x}  \leq\Bignorm{\Bigl(\sum_{i=1}^n \varepsilon_i \phi_i\Bigr)(T_k)}\norm{x}
 \leq K\Bignorm{\sum_{i=1}^n \varepsilon_i \phi_i}_{\infty,{\mathbb D}}\norm{x}
\leq C_0 K\norm{x},
$$
where $K$ comes from (\ref{5PB}) and 
$C_0$ is the first supremum in (a).

Let $\phi\in \P_{d,0}$.
According to \cite[Lemma 2.8]{BLM}, there exists $r\in(0,1)$ such that
$\sigma(T_k)\subset E_r\cup E$ and 
\begin{equation}\label{5Unif}
\sup\Bigl\{\prod_{j=1}^N\vert \xi_j -z\vert \norm{R(z,T_k)}\, :\, z\in\partial E_r\setminus E\Bigr\}\,<\infty,
\end{equation}
for all $1\leq k\leq d$. Then for any $i_1,\ldots, i_d\geq 1$, we have
\begin{align*}
\phi(T_1&,\ldots,T_d)\prod_{k=1}^d \theta_{i_k}(T_k)\\
&=\Bigl(\frac{1}{2\pi i}\Bigr)^d \int_{(\partial E_r)^d} \phi(z_1,\ldots,z_d)\prod_{k=1}^d 
\bigl(\theta_{i_k}(z_k) R(z_k,T_k)\bigr)\,dz_1\cdots dz_d.
\end{align*}
Thanks to the uniform estimate (\ref{5Unif}) and to (c), this integral representation implies that
$$
\Bignorm{\phi(T_1,\ldots,T_d)\prod_{k=1}^d \theta_{i_k}(T_k)}\leq M\norm{\phi}_{\infty,(\partial E_r)^d},
$$
for some constant $M>0$ not depending on $\phi$.

Writing (d) for independent variables $z_1,\ldots, z_d$ and multiplying the resulting  identities, we obtain that
$$
1 = \sum_{i_1,\ldots,i_d=1}^{\infty}\, \prod_{k=1}^d \theta_{i_k}(z_k)
\prod_{k=1}^d \phi_{i_k}(z_k)\prod_{k=1}^d \psi_{i_k}(z_k),
\qquad (z_1,\ldots,z_d)\in\Ddb^d.
$$
Then by an entirely classical Fubini argument based on (\ref{5Unif}), we deduce  that
\begin{equation}\label{5Lim}
\phi(T_1,\ldots,T_d) =  \sum_{i_1,\ldots,i_d=1}^{\infty}\, \phi(T_1,\ldots,T_d)\prod_{k=1}^d \theta_{i_k}(T_k)
\prod_{k=1}^d \phi_{i_k}(T_k)\prod_{k=1}^d \psi_{i_k}(T_k),
\end{equation}
the family on the right hand side being absolutely summable in $B(H)$.
In the sequel, we set
\begin{equation}\label{5Lambda}
\Lambda_n =  \sum_{i_1,\ldots,i_d=1}^{n}\, \phi(T_1,\ldots,T_d)\prod_{k=1}^d \theta_{i_k}(T_k)
\prod_{k=1}^d \phi_{i_k}(T_k)\prod_{k=1}^d \psi_{i_k}(T_k),
\end{equation}
for all $n\geq 1$.

Let $x,y\in H$ and let $n\geq 1$. By the Cauchy-Schwarz inequality, we have
\begin{align*}
\bigl\vert \langle \Lambda_n x,y\rangle \bigr\vert & \leq \sup_{1\leq i_1,\ldots, i_d\leq n}
\Bignorm{\phi(T_1,\ldots,T_d)\prod_{k=1}^d \theta_{i_k}(T_k)}\\
& \times \biggl(\sum_{i_1,\ldots,i_d=1}^{n}\Bignorm{\prod_{k=1}^d \phi_{i_k}(T_k)x}^2\biggr)^\frac12
\biggl(\sum_{i_1,\ldots,i_d=1}^{n}\Bignorm{\prod_{k=1}^d \psi_{i_k}(T_k)^* y}^2\biggr)^\frac12,
\end{align*}
which is less than or equal to 
$$
M\norm{\phi}_{\infty,(\partial E_r)^d}
\biggl(\sum_{i_1,\ldots,i_d=1}^{n}\Bignorm{\prod_{k=1}^d \phi_{i_k}(T_k)x}^2\biggr)^\frac12
\biggl(\sum_{i_1,\ldots,i_d=1}^{n}\Bignorm{\prod_{k=1}^d \psi_{i_k}(T_k)^* y}^2\biggr)^\frac12.
$$
Applying (\ref{5SF}) $d$ times, we obtain that 
$$
\biggl(\sum_{i_1,\ldots,i_d=1}^{n}\Bignorm{\prod_{k=1}^d \phi_{i_k}(T_k)x}^2\biggr)^\frac12
\leq C^d\norm{x}.
$$
Using the second supremum in (a), we obtain as well an estimate
$$
\biggl(\sum_{i_1,\ldots,i_d=1}^{n}\Bignorm{\prod_{k=1}^d \psi_{i_k}(T_k)^* y}^2\biggr)^\frac12
\leq C^d\norm{y}.
$$
Hence we finally have
$$
\bigl\vert \langle\Lambda_n x,y\rangle \bigr\vert\leq 
M C^{2d}\norm{\phi}_{\infty,(\partial E_r)^d}\norm{x}\norm{y}.
$$
Taking the supremum over all $x,y$ with $\norm{y}=\norm{x}=1$ and applying (\ref{5Lim}) and (\ref{5Lambda}), we deduce the estimate
$$
\bignorm{\phi(T_1,\ldots,T_d)}\leq 
M C^{2d}\norm{\phi}_{\infty,(\partial E_r)^d}.
$$
Now choose a polygon $\Delta$ such that
$E_r\subset\Delta\subset\Ddb$. Then we obtain (\ref{5Goal}) with $C'=M C^{2d}$.
\end{proof}

\bigskip
\noindent
{\bf Acknowledgements.} 
The authors warmly thank the referee for his/her careful reading of the first version of this manuscript.
The first author was supported by the ANR project Noncommutative
analysis on groups and quantum groups (No./ANR-19-CE40-0002).

\end{document}